\newtheorem{thm}{Theorem}[section]
\newtheorem{lem}[thm]{Lemma}
\theoremstyle{definition}
\theoremstyle{remark}
\newtheorem{rem}{Remark}[section]
\def\serieslogo@{}
\def\@setcopyright{}
\begin{document}
\title[Entropy satisfying DG methods]{An entropy satisfying discontinuous Galerkin method for nonlinear Fokker-Planck equations}

\author[H.~Liu and Z.~ Wang]{Hailiang Liu$^\dagger$ and Zhongming Wang$^\ddagger$}
\address{$^\dagger$Iowa State University, Mathematics Department, Ames, IA 50011} \email{hliu@iastate.edu}
\address{$^\ddagger$ Florida International University,  Department of Mathematics and Statistics,  Miami, FL 33199}
\email{zwang6@fiu.edu}
\subjclass{35B40, 65M60,  92D15}
\keywords{Discontinuous Galerkin, Fokker-Planck, entropy dissipation}
\begin{abstract}
We propose a high order  discontinuous Galerkin (DG) method for solving nonlinear Fokker-Planck equations with a gradient flow structure.
For some of these models  it is known that the transient solutions converge to steady-states when time tends to infinity. The scheme  is shown to satisfy a discrete version of the entropy dissipation law and preserve steady-states, therefore providing numerical solutions  with satisfying long-time behavior. The positivity of numerical solutions is enforced through a reconstruction algorithm, based on positive cell averages. For the model with trivial potential, a parameter range sufficient for positivity preservation is rigorously established. For other cases, cell averages can be made positive at each time step by tuning the numerical flux parameters.   A selected set of numerical examples is presented to confirm both the high-order accuracy and the efficiency to capture the large-time asymptotic. 
\end{abstract}

\maketitle

\section{Introduction}
In this paper,  we propose a high order accurate discontinuous Galerkin (DG) method for solving the following problem 
\begin{subequations}\label{fp}
\begin{align}
\partial_t u & =\nabla_x\cdot (f(u)\nabla_x (\Phi(x)+H'(u)) ),\quad x\in \Omega,\; t >0, \\
u(x, 0) & = u_0(x),
\end{align}
\end{subequations}
subject to appropriate boundary conditions. Here $ u(t, x)\geq 0$  is the unknown,  $\Omega$ is a bounded domain in  $\mathbb{R}^d$,
$H: \mathbb{R}^+ \to \mathbb{R}$  and $f: \mathbb{R}^+ \to \mathbb{R}^+$ are given functions, and $\Phi(x)$ is a given potential function.

 This equation has a gradient flow structure corresponding to the entropy functional
$$E=\int_{\Omega} (H(u) +u\Phi(x))dx.$$
A simple calculation shows that the time derivative of this entropy along the equation (\ref{fp}a) with zero flux boundary condition is
\begin{equation}\label{en}
\frac{d}{dt} E(t) =-\int_{\Omega} f(u)|\nabla_x(\Phi+H'(u))|^2dx \leq 0,
\end{equation}
which reveals the entropy dissipation property of the underlying system.  Certain entropy dissipation inequalities are  recognized to characterize the fine details of the convergence to steady states, see e.g.,  \cite{CJMTU,CMV03,CT, Otto}.

Equations such as (\ref{fp}a)  appear in a wide range of applications. In the case $f(u)=u$,  the equation becomes
\begin{equation}\label{fplinear}
\partial_t u =\nabla_x\cdot (u\nabla_x (\Phi(x)+H'(u))).
\end{equation}
If  $H'(u)=u^m (m>1)$ and $\Phi=0$,  it is the porous medium equation  \cite{CT,Otto}, and for $H'(u)=\nu u^{m-1}$ and $\Phi={x^4}/{4}-{x^2}/{2}$, it is the nonlinear diffusion equation confined by a double-well potential  \cite{CCH}.
A particular example with nonlinear $f(u)$  is
\begin{align}
\partial_t u=\nabla_x\cdot (x u(1+ku)+\nabla_x u), \label{GFP1}
\end{align}
which is known as a model for fermion ($k=-1$) and boson ($k=1$) gases \cite{CLR,CRS,Toscani}.
A more general class of the form
\begin{align}
\partial_t u=\nabla_x\cdot (x u(1+u^N)+\nabla_x u),\label{GFP2}\quad N>2,
\end{align}
is known to develop finite time concentration beyond some critical mass  \cite{AGT}.

In order to capture the rich dynamics of solutions to (\ref{fp}), it is highly desirable to develop high order schemes which can preserve the entropy dissipation law (\ref{en}) at the discrete level.  In this work, we propose such a scheme for \eqref{fp} using the discontinuous Galerkin discretization.

A related finite volume method was already proposed in \cite{BF} for  (\ref{fp}), 
and further generalized to cover the nonlocal terms and general dimension in \cite{CCH}. For  (\ref{fp}) with $f(u)=u$ and an additional nonlocal interaction term,  a mixed finite element method was studied in \cite{BCW} based on their interpretation as gradient flows in optimal transportation metrics, following the so called JKO formulation,  which is a variational scheme  proposed by Jordan, Kinderlehrer and Otto \cite{JKO98}  for linear Fokker-Planck equations.  Regarding the use of relative entropy functionals  we refer to \cite{AU} for the study of the large time behavior of a fully implicit semi-discretization applied to  linear parabolic Fokker-Planck type equations in the form of (\ref{fp}) with $f(u)=u$, $H=u {\rm log} u$.  A free energy satisfying finite difference method was proposed in \cite{LW} for the Poisson-Nernst-Planck (PNP) equations, which correspond to  (\ref{fp}) with $f=u$, $H=u{\rm log}u$, further coupled with a Poisson equation for governing the potential $\Phi$. However, these existing schemes are only up to second-order.

An entropy satisfying DG method has been recently developed in \cite{LY14a} for the linear Fokker-Planck equation
\begin{equation}\label{fplinear+}
\partial_t u =\nabla_x\cdot (\nabla_x u  + u \nabla_x \Phi),
\end{equation}
which corresponds to (\ref{fplinear}) with $H=u log u$. The obtained DG method generalizes and improves upon the finite volume method introduced in \cite{LY12}.  The idea in \cite{LY14a} is to apply the DG discretization to the non-logarithmic Landau formulation of (\ref{fplinear+}), 
$$
\partial_t u =\nabla_x\cdot \left(M\nabla_x \left(\frac{u}{M}\right)\right), \quad M=e^{-\Phi(x)},
$$
so that the quadratic entropy dissipation law is satisfied. Again based on this formulation, a third order  DG scheme
was further developed in \cite{LY14b} to numerically preserve the maximum principle:  if $c_1 \leq u_0(x)/M \leq c_2$, then $c_1 \leq u(x, t)/M \leq c_2$ for all $t>0$.     However, the non-logarithmic Landau formulation does not apply directly to the more general class of equations (\ref{fp}a).

{In this work, we construct an arbitrary high order entropy satisfying DG scheme for solving (\ref{fp}). The main idea behind the scheme construction is to apply the DG discretization to the following reformulation
\begin{equation}\label{uq+}
\partial_t u=\partial_x (f(u)\partial_x q), \quad  q =\Phi(x)+H'(u),
\end{equation}
by using a special numerical flux for  $\partial_x q$.
The resulting scheme is shown to feature several nice properties:   (i)  the entropy dissipation law (\ref{en}) is satisfied at
 the discrete level; (ii) the steady states are shown to be preserved; (iii) for the third order scheme applied to the model with a trivial potential, a sufficient condition on the range of flux parameters is rigorously  established so that cell averages remain positive at each time step, as long as each cell polynomial is positive at three test points.
For the numerical positivity a reconstruction algorithm based on positive cell averages is introduced so that the positivity of cell polynomials is enforced, without destroying the accuracy, at least for smooth solutions. This reconstruction also serves as a limiter imposed upon the numerical solution to suppress spurious oscillations at the solution singularity near zero.  For the general case the positivity of cell averages can be achieved by carefully tuning the parameters in the numerical flux, as illustrated
in the numerical experiments.

The discontinuous Galerkin (DG) method we discuss in this paper is a class of finite element methods, using a completely discontinuous piecewise polynomial space for the numerical solution and the test functions. One main advantage of the DG method was the flexibility afforded by local approximation spaces combined with the suitable design of numerical fluxes crossing cell interfaces. More general information about DG methods for elliptic, parabolic, and hyperbolic PDEs can be found in the recent books and lecture notes \cite{HW07, Li06,  Ri08, Sh09}. Following the methodology of the direct discontinuous Galerkin (DDG) method proposed in \cite{LY09, LY10},  we adopt a similar numerical flux formula for $\partial_x q$ in (\ref{uq+}).  The main feature in the DDG schemes proposed in \cite{LY09, LY10} lies in numerical flux choices for the solution gradient, which involve higher order derivatives evaluated crossing cell interfaces.


The plan of the paper is as follows. In Section 2, we present our DG scheme in one dimensional setting.  In Section 3 we prove  several important  properties of the scheme, including the semi-discrete entropy dissipation law in Theorem 3.1, the fully-discrete entropy dissipation law in Theorem 3.3, 
the preservation of positive cell averages for the model with trivial potential in Theorem 3.4, and the preservation of steady states in Theorem 3.5. 
In Section 4, we elaborate various details in numerical implementation, including the reconstruction algorithm,  the time discretization, and the spatial  
Numerical results  are in Section 5, where we verify experimentally the high order spatial accuracy of our scheme and simulate the long-time behavior of numerical solutions. The proposed scheme is applied to several physical models including the porous medium equation, the nonlinear diffusion with a double-well potential, and the general Fokker--Planck equation. The numerical results confirm both the high order of accuracy and the  numerical efficiency to capture the large-time asymptotic. Concluding remarks are given in Section 6.

\section{DG discretization in space}	
In this section,  we present our DG scheme for (\ref{fp}).  For clarity of presentaiton , we restrict ourselves to the problem
 in one spatial dimension. It is straightforward to generalize this construction
 for Cartesian meshes in multidimensional case.

In  one-dimensional setting, let $\Omega = [a, b]$ be a bounded interval.  We divide $\Omega$ with  a mesh
$$
a=x_{1/2}<x_1<\cdots <x_{N-1/2}<x_N<x_{N+1/2}=b,
$$
and the mesh size $\Delta x_j=x_{j+1/2}-x_{j-1/2}$, and a family of $N$ control cells
$I_j =(x_{j-1/2}, x_{j+1/2})$  with cell center $x_j =(x_{j-1/2} +x_{j+1/2})/2$.  We denote by $v^+$ and $v^-$ the right and left limits of function $v$, and define
$$
[v]=v^+-v^-, \quad \{v\}=\frac{v^++v^-}{2}.
$$
Define an $k-$degree discontinuous finite element space
$$
V_h= \left\{v\in L^2(\Omega), \quad v|_{I_j}\in P^k(I_j), j\in \mathbb{Z}_N \right \},
$$
where $P^k(I_j)$ denotes the set of all polynomials of degree at most $k$ on $I_j$, and $\mathbb{Z}_r=\{1, \cdots, r\}$ for any positive integer $r$.

We rewrite the equation \eqref{fp} as follows
\begin{subequations}\label{fp+}
\begin{align}
& \partial_t u=\partial_x (f(u)\partial_x q), \\
& q =\Phi(x)+H'(u).
\end{align}
\end{subequations}
The DG scheme is to find  $(u_h, q_h)\in V_h \times V_h$ such that for all $v, r \in V_h$ and $j\in \mathbb{Z}_N$,
\begin{subequations}\label{DG}
\begin{align}
& \int_{I_j} \partial_t u_{h} vdx =-\int_{I_j} f(u_h)\partial_xq_{h} \partial_x vdx +\{f(u_h)\}\widehat{\partial_xq_{h}}v|_{\partial I_j} +\{f(u_h)\}\partial_xv (q_h-\{q_h\})|_{\partial I_j},\\
 & \int_{I_j}q_hrdx =\int_{I_j}(\Phi(x) +H'(u_h))rdx.
\end{align}
\end{subequations}
Here
$$
v|_{\partial I_j}=v(x_{j+1/2}^-) - v(x_{j-1/2}^+),
$$
and $\widehat{\partial_x q_{h}}$ is the numerical flux, following \cite{LY10}, taken as
\begin{equation}
\widehat{\partial_xq_{h}}=\beta_0 \frac{[q_h]}{h} +\{\partial_xq_{h}\}+\beta_1h[\partial_x^2q_{h}], \label{flux}
\end{equation}
where $h=\Delta x$ for uniform meshes and $h=(\Delta x_j +\Delta x_{j+1})/2$ at $x_{j+1/2}$ for non-uniform meshes.  Here $\beta_i, i=0, 1$  are parameters  satisfying  a condition of the form
$$
\beta_0>\Gamma(\beta_1),
$$
where $\Gamma(\beta_1)$ is chosen to ensure certain stability property of the underlying PDE.  

Note that if zero-flux boundary conditions of the form $\partial_x (\Phi(x)+H'(u))=0$ are specified, we simply set $q$-related terms on the domain boundary
to be zero. If a Dirichlet boundary condition  for $u$ is given at $\partial \Omega$, we define the boundary numerical flux \eqref{flux}
in the following way: 
\begin{subequations}\label{IC}
\begin{align}
\{f(u_h)\}&= \frac{f(u(a, t)) +f(u_h^+)}{2} \text{  if  }{x=a}; \quad  \frac{f(u_h^-)+f(u(b, t))}{2} \text{  if  }{x=b}, \\
[q_h] & =
\left\{\begin{array}{ll}
q_h^+ - \left( \Phi(a) +H'(u(a, t)) \right) &\quad \text{ for } x=a, \\
 \left( \Phi(b) +H'(u(b, t))  \right)- q_h^- &\quad \text{ for } x=b,
\end{array}
\right. \\
\{\partial_xq_{h}\}&=\partial_xq_{h}^+ \text{  if  }{x=a}; \quad  \partial_xq_{h}^- \text{  if  }{x=b}, \\
[\partial_x^2q_{h}]&=0.
\end{align}
\end{subequations}
Here the boundary conditions are built into the scheme in such a way that the boundary data are used when available, otherwise the value of the numerical solution in corresponding end cells will be used.

\section{Properties of the DG scheme} In this section, we investigate several desired properties of the semi-discrete DG scheme (\ref{DG}), and its time discretization.
\subsection{Entropy dissipation}  We first state the entropy satisfying property of DG scheme (\ref{DG}), using the following notation:
\begin{equation}\label{qe}
\|q_h\|_E^2:= \left[ \sum_{j=1}^N\int_{I_j} f(u_h) |\partial_xq_h|^2dx +\sum_{j=1}^{N-1}\left. \{f(u_h)\}
\left(  \frac{\beta_0}{h}[q_h]^2 \right)\right|_{x_{j+\frac12}} \right].
\end{equation}
\begin{thm} Consider the DG scheme \eqref{DG}-\eqref{flux}, subject to zero-flux
boundary condition. If $f(u_h)\geq 0$, then the semi-discrete entropy
$$
E(t)=\sum_{j=1}^N\int_{I_j} (\Phi u_h+H(u_h))dx
$$
satisfies
\begin{equation}\label{ed}
\frac{d}{dt} E(t)\leq -\gamma \|q_h\|^2_E
\end{equation}
for $\gamma=1-\sqrt{\frac{\Gamma}{\beta_0}} \in (0, 1)$, provided
\begin{equation}\label{b0}
\beta_0>\Gamma (\beta_1):= \max_{1\leq j\leq N-1} \frac{\{f(u_h)\}
 \left( \{\partial_xq_{h}\} +\frac{\beta_1}{2} h[\partial_x^2q_{h}]\right)^2\Big|_{x_{j+1/2}} }{\frac{1}{2h}\left(\int_{I_j}+\int_{I_{j+1}}\right)f(u_h)|\partial_xq_{h}|^2dx}.
\end{equation}
\end{thm}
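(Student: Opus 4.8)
The plan is to differentiate the discrete entropy and then feed the two scheme equations into the computation so that it parallels the continuous identity \eqref{en}. First I would write $\frac{d}{dt}E(t)=\sum_j\int_{I_j}\bigl(\Phi+H'(u_h)\bigr)\partial_t u_h\,dx$. The decisive observation is that $\partial_t u_h\in V_h$, hence it is an admissible test function $r$ in (\ref{DG}b); taking $r=\partial_t u_h$ there replaces the non-polynomial quantity $\Phi+H'(u_h)$ by its local $L^2$-projection $q_h$ and produces the clean identity $\frac{d}{dt}E(t)=\sum_j\int_{I_j}q_h\,\partial_t u_h\,dx$. I expect this swap to be the conceptual crux of the argument, since it is precisely what tames the nonlinearity hidden in $H'(u_h)$.

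Next I would substitute $v=q_h$ into (\ref{DG}a) and sum over $j$. The volume term yields $-\sum_j\int_{I_j}f(u_h)|\partial_x q_h|^2\,dx$, matching the first piece of $-\|q_h\|_E^2$ in \eqref{qe}. The two interface terms then have to be reorganized into a single sum over interior nodes $x_{j+1/2}$, the physical-boundary contributions dropping out by the zero-flux condition. Using $v|_{\partial I_j}=v(x_{j+1/2}^-)-v(x_{j-1/2}^+)$ together with the single-valuedness of $\{f(u_h)\}$ and $\widehat{\partial_x q_h}$, the first interface term collapses to $-\sum_{j=1}^{N-1}\{f(u_h)\}\,\widehat{\partial_x q_h}\,[q_h]$, and the second, via $q_h^{\mp}-\{q_h\}=\mp\tfrac12[q_h]$, to $-\sum_{j=1}^{N-1}\{f(u_h)\}\,\{\partial_x q_h\}\,[q_h]$. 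Inserting the flux \eqref{flux} then gives each interface the form $-\{f(u_h)\}\bigl(\beta_0 h^{-1}[q_h]^2+2[q_h]B\bigr)$ with $B:=\{\partial_x q_h\}+\tfrac{\beta_1}{2}h[\partial_x^2 q_h]$; the diagonal part reproduces the jump piece of $-\|q_h\|_E^2$, and the cross term $2[q_h]B$ is the sole obstruction to sign-definiteness.

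The last step is to absorb this cross term by a weighted Young inequality. Setting $D_j:=\int_{I_j}f(u_h)|\partial_x q_h|^2\,dx$ and $J_{j+1/2}:=\{f(u_h)\}\beta_0 h^{-1}[q_h]^2$, the bound $2\{f(u_h)\}|[q_h]||B|\le\theta J_{j+1/2}+(\theta\beta_0)^{-1}h\{f(u_h)\}B^2$ holds for every $\theta>0$ (the weights inside cancel, so it is sharp). The denominator in the definition \eqref{b0} of $\Gamma(\beta_1)$ is exactly $\tfrac{1}{2h}(D_j+D_{j+1})$, whence $\{f(u_h)\}B^2\le\Gamma\,\tfrac{1}{2h}(D_j+D_{j+1})$ at each node; summing, and noting each $D_j$ is counted at most twice, bounds the $B^2$ contribution by $\tfrac{\Gamma}{\theta\beta_0}\sum_j D_j$. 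Collecting terms gives $\frac{d}{dt}E(t)\le-\bigl(1-\tfrac{\Gamma}{\theta\beta_0}\bigr)\sum_j D_j-(1-\theta)\sum_j J_{j+1/2}$, and the choice $\theta=\sqrt{\Gamma/\beta_0}$ equalizes the two coefficients to the common value $\gamma=1-\sqrt{\Gamma/\beta_0}$, which lies in $(0,1)$ precisely because $\beta_0>\Gamma$. Everything but this balancing is routine; the one spot demanding care is keeping the factor of $2$ from the two interface terms consistent with the $\tfrac{\beta_1}{2}$ buried inside $\Gamma(\beta_1)$.
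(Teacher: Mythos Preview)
Your proposal is correct and follows essentially the same route as the paper: take $r=\partial_t u_h$ in (\ref{DG}b) to convert $\Phi+H'(u_h)$ into $q_h$, take $v=q_h$ in (\ref{DG}a) and sum to obtain the interface expression $-\{f(u_h)\}\bigl(\beta_0 h^{-1}[q_h]^2+2[q_h]B\bigr)$, then absorb the cross term by Young's inequality using the definition of $\Gamma(\beta_1)$. The only cosmetic difference is that the paper fixes the Young parameter as $1-\gamma$ from the outset and then verifies $\beta_0(1-\gamma)^2=\Gamma$ makes the residual nonnegative, whereas you introduce a free parameter $\theta$ and optimize it to $\theta=\sqrt{\Gamma/\beta_0}=1-\gamma$; the bookkeeping of the end-cell terms $D_1,D_N$ (counted once rather than twice) is likewise handled by a simple upper bound in your version and by an explicit nonnegative remainder in the paper.
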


\begin{proof}
Summing \eqref{DG}-\eqref{flux}  over all index $j$ we obtain a global formulation:
\begin{align}\label{utv}
\int_\Omega \partial_tu_{h}v dx & = - \sum_{j=1}^N \int_{I_j} f(u_h) \partial_xq_{h} \partial_xvdx - \sum_{j=1}^{N-1} \{f(u_h)\} \left(
\widehat{\partial_xq_{h}} [v]+ \{ \partial_xv\}[q_h]\right)_{j+1/2},\\ \label{qr}
\int_{\Omega} q_h r dx  & =\int_\Omega (\Phi +H'(u_h))rdx.
\end{align}
Take $r=\partial_t u_{h}$  in (\ref{qr}) to obtain
$$
\int_\Omega \partial_tu_{h} q_hdx =\int_\Omega ( \Phi(x)+H'(u_h))\partial_t u_{h}dx=\frac{d}{dt}\int_\Omega (\Phi u_h+H(u_h))dx
=\frac{d}{dt}E(t).
$$
The right hand side from taking $v=q_h$ in (\ref{utv}) becomes
\begin{align*}
\frac{d}{dt}E(t) &= -\sum_{j=1}^N\int_{I_j} f(u_h)|\partial_xq_{h}|^2dx -\sum_{j=1}^{N-1} \{f(u_h)\} \left( \widehat{\partial_xq_{h}}[q_h]
+\{\partial_xq_{h}\}[q_h]\right)_{j+1/2}\\
 & =   -\sum_{j=1}^N\int_{I_j} f(u_h)|\partial_xq_{h}|^2dx -\sum_{j=1}^{N-1} \{f(u_h)\} \left(  \beta_0 [q_h]^2/h +[q_h]( 2\{\partial_xq_{h}\}
 +\beta_1h[\partial_x^2q_{h}])
 \right)_{j+1/2}.
\end{align*}
Using Young's  inequality we obtain
$$
- (2\{\partial_xq_{h}\} + \beta_1h[\partial^2_xq_{h}]) [q_h]\leq \beta_0(1-\gamma)[q_h]^2/h+ \frac{h}{4\beta_0(1-\gamma)} \left( 2\{\partial_xq_{h}\}
+\beta_1h[\partial_x^2q_{h}]\right)^2
$$
for some $0<\gamma<1$.  Hence
\begin{align}
\frac{d}{dt}E(t) & \leq  -\gamma\left[ \sum_{j=1}^N\int_{I_j} f(u_h)|\partial_xq_{h}|^2dx +\sum_{j=1}^{N-1}\left(  \frac{\{f(u_h)\}\beta_0}{h}[q_h]^2 \right)_{j+1/2} \right] \label{DE1}\\ \notag
& \quad - \left[ (1-\gamma)\sum_{j=1}^N\int_{I_j} f(u_h)|\partial_xq_{h}|^2dx -\sum_{j=1}^{N-1} \frac{h\{f(u_h)\}}{4\beta_0(1-\gamma)}  \left( 2\{\partial_xq_{h}\} +\beta_1h[\partial_x^2q_{h}]\right)^2 \right]\\ \notag
& \leq  -\gamma\left[ \sum_{j=1}^N\int_{I_j} f(u_h)|\partial_xq_{h}|^2dx +\sum_{j=1}^{N-1}\left(  \frac{\{f(u_h)\}\beta_0}{h}[q_h]^2 \right)_{j+1/2} \right]  \\ \notag
& \quad -\frac{1-\gamma}{2}\int_{I_1\cup I_N} f(u_h)|\partial_xq_{h}|^2dx,  \label{DE3}
\end{align}
since $\beta_0$ satisfies (\ref{b0}), hence
$$
\beta_0(1-\gamma)^2 = \Gamma \geq \frac{\sum_{j=1}^{N-1} h\{f(u_h)\} \left(\{\partial_xq_{h}\} +\frac{\beta_1}{2}h[\partial_x^2q_{h}]\right)_{j+1/2}^2}
{ \left( \sum_{j=2}^{N-1}\int_{I_j} + \frac{1}{2}\int_{I_1\cup I_N}\right) f(u_h)|\partial_xq_{h}|^2dx}.
$$
This finishes the proof of (\ref{ed}).
\end{proof}
\begin{rem} We remark that  a larger,  yet simpler,  $\Gamma(\beta_1)$ can be found for sufficiently small $h$ since the variation of ratio $\frac{\{f\}}{f}$ is also small. Assume that this ratio is bounded by a factor  $2$,  i.e., $ 2 \geq \frac{f}{\{f\}}\geq \frac{1}{2}$, then
\begin{align*}
 \Gamma(\beta_1) &  \leq 2 \max_{1\leq j\leq N-1}  \frac{\left( \{\partial_xq_{h}\} +\frac{\beta_1}{2} h[\partial_x^2q_{h}]\right)^2\Big|_{x_{j+1/2}} }
{\frac{1}{2h}\left(\int_{I_j}+\int_{I_{j+1}}\right)|\partial_xq_{h}|^2dx} \\
&\leq 2 \max_{1\leq j\leq N-1}\frac{\left(\frac{\partial_xq_h^--\beta_1h\partial^2_xq_h^-}{2}\right)_{x_{j+1/2}}^2+ \left(\frac{\partial_xq_h^++\beta_1h\partial^2_xq_h^+}{2}\right)^2_{x_{j+1/2}}}
{\frac{1}{2h}\left(\int_{I_j}|\partial_xq_{h}|^2dx +\int_{I_{j+1}}|\partial_xq_{h}|^2dx\right)}
\end{align*}
It is clear that this inequality is implied by
\begin{equation}
 \Gamma(\beta_1) \leq2 \max_{1\leq j\leq N-1}\left\{\frac{(\partial_xq_h^--\beta_1h\partial_x^2q_h^-)^2}{\frac{1}{2h}\int_{I_j}|\partial_xq_h|^2},\frac{(\partial_xq_h^++\beta_1h\partial_x^2q_h^+)^2}{\frac{1}{2h}\int_{I_{j+1}}|\partial_xq_h|^2}\right\}.
\label{gamma_inter}
\end{equation}
By setting $v(\xi)=\partial_xq_h\left(x_j+\frac{h}{2}\xi\right)$ for $q_h(x)|_{I_j}$, and $v(\xi)=\partial_xq_h\left(x_{j+1}-\frac{h}{2}\xi\right)$ for $q_h|_{I_{j+1}}$, we have
$$
 \Gamma(\beta_1) \leq 2 \sup_{v\in P^{k-1}} \frac{(v(1) -2\beta_1 \partial_\xi v(1))^2}{ \frac{1}{2}\int_{-1}^1 |v|^2d\xi}
 =2k^2 \left(1-\beta_1 (k^2-1)+\frac{\beta_1^2}{3}(k^2-1)^2 \right),
$$
here we have used the exact formula in \cite[Lemma 3.1]{Liu14}. Hence it suffices to choose $\beta_0$ such that
\begin{equation}\label{b0k}
\beta_0>2k^2 \left(1-\beta_1 (k^2-1)+\frac{\beta_1^2}{3}(k^2-1)^2 \right).
\end{equation}

 \end{rem}
\begin{rem} The positivity of numerical solutions are realized through a reconstruction algorithm at each time step, based on positive cell averages,
as detailed in Section 4.1.  It is shown in Theorem \ref{thk2}  that the use of non-zero $\beta_1$ is crucial in the sense that the positivity of cell averages
can be ensured. Indeed, this is proved for the third order DG scheme in solving (\ref{fp}) with zero potential. For the model with non-trivial potential, our numerical experiments again confirm the special role of $\beta_1$ in the preservation of  positivity  of  numerical cell averages.
\end{rem}

\subsection{The fully-discrete DG scheme}
In order to preserve the entropy dissipation law for $u_h^n$ at each time step, the time step restriction  is needed when using an explicit time discretization.
We now discuss this issue by taking the Euler first order time discretization of (\ref{DG}): find  $u_h^{n+1}(x)\in V_h$ such that for any $r(x), v(x) \in V_h$,
\begin{subequations}\label{fully_DG}
\begin{align}
&\int_{I_j}q_h^n r\,dx =\int_{I_j}\left(\Phi(x) +H'(u_h^n) \right) r\,dx,\\
&\int_{I_j}D_t u_h^n v \,dx =-\int_{I_j} f(u_h^n) \partial_x q_{h}^n \partial_x v\,dx
	+\{f(u_h^n)\}\left.\left[\widehat{\partial_x q_h^n }v + \partial_x v (q_h^n-\{q_h^n\})\right]\right|_{\partial I_j}.
\end{align}
\end{subequations}
Here and in what follows, we use the notation for any function $w^n(x)$ as
$$
D_t w^n =\frac{w^{n+1}-w^n}{\Delta t},
$$
and $\mu = \frac{\Delta t}{h^2}$ as the mesh ratio.
\begin{lem} The following  inverse inequalities hold for any $v\in V_h$:
\begin{subequations}\label{in}
\begin{align}
& \sum_{j=1}^N \int_{I_j} v_x^2 dx \leq \frac{k(k+1)^2(k+2)}{h^2} \sum_{j=1}^N  \int_{I_j}v^2 dx, \\
& \sum_{j=1}^{N-1} [v]_{j+1/2}  \leq \frac{4(k+1)^2}{h} \sum_{j=1}^N  \int_{I_j}v^2 dx, \\
& \sum_{j=1}^{N-1} \{v_x\}^2_{j+1/2} \leq \frac{k^3(k+1)^2(k+2)}{h^2} \sum_{j=1}^N  \int_{I_j}v^2 dx.
\end{align}
\end{subequations}
\end{lem}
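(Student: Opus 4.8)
The plan is to reduce all three estimates to the reference interval $[-1,1]$ via the affine map $x=x_j+\tfrac{h}{2}\xi$ on each cell $I_j$, under which $\int_{I_j}v^2\,dx=\tfrac{h}{2}\int_{-1}^1 p^2\,d\xi$ and $v_x=\tfrac{2}{h}p'(\xi)$, so that every power of $h$ is bookkept by the Jacobian. On $[-1,1]$ I would expand the local polynomial in Legendre series, $p=\sum_{m=0}^k c_m P_m$, and rely on three classical facts: orthogonality $\int_{-1}^1 P_m^2\,d\xi=\tfrac{2}{2m+1}$, the endpoint normalization $|P_m(\pm1)|=1$, and the derivative identity $\int_{-1}^1(P_m')^2\,d\xi=m(m+1)$. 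From the first two, one Cauchy--Schwarz step yields the \emph{trace inequality} $p(\pm1)^2\le\big(\sum_{m=0}^k\tfrac{2m+1}{2}\big)\|p\|^2=\tfrac{(k+1)^2}{2}\|p\|^2$, using $\sum_{m=0}^k(2m+1)=(k+1)^2$. This single lemma drives all three estimates.

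For (a) I would deliberately avoid the sharp Rayleigh quotient of the stiffness-versus-mass forms and instead take the cruder route that reproduces exactly the stated constant: the triangle inequality gives $\|p'\|\le\sum_{m=0}^k|c_m|\,\|P_m'\|$, and Cauchy--Schwarz against the weights $\tfrac{2}{2m+1}$ yields
\[
\|p'\|^2\le\Big(\sum_{m=0}^k c_m^2\tfrac{2}{2m+1}\Big)\Big(\sum_{m=0}^k m(m+1)\tfrac{2m+1}{2}\Big)=\|p\|^2\sum_{m=1}^k\tfrac{m(m+1)(2m+1)}{2}.
\]
The decisive computation is the closed form $\sum_{m=1}^k m(m+1)(2m+1)=\tfrac12 k(k+1)^2(k+2)$, which collapses the bound to $\|p'\|^2\le\tfrac{k(k+1)^2(k+2)}{4}\|p\|^2$; rescaling by the Jacobian gives (a). For (b) I would split each jump via $[v]^2\le2(v^-)^2+2(v^+)^2$, apply the trace inequality cellwise, and note that summing over interior faces charges each cell its two endpoint traces at most once, producing the factor $4(k+1)^2/h$.

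For (c) I would compose the ingredients: $\{v_x\}^2\le\tfrac12\big((v_x^+)^2+(v_x^-)^2\big)$, then apply the trace inequality to $p'$, which has degree $k-1$ and hence satisfies $p'(\pm1)^2\le\tfrac{k^2}{2}\|p'\|^2$, and finally insert (a) to control $\|p'\|^2$ by $\|p\|^2$. The product $k^2\cdot k(k+1)^2(k+2)=k^3(k+1)^2(k+2)$ is the asserted constant, while the differentiation scaling $v_x=\tfrac2h p'$ forces one more power of $h^{-1}$ here than in (a). The real obstacle is concentrated in (a): the sharp constant is strictly smaller than $\tfrac{k(k+1)^2(k+2)}{4}$ (already for $k=2$ the true supremum is $15$, not $18$), so the subtlety is to choose the \emph{non-optimal} triangle/Cauchy--Schwarz estimate precisely because its constant telescopes into the clean polynomial in $k$; everything else is careful bookkeeping of the Jacobian powers of $h$.
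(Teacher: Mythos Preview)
Your argument is correct and structurally identical to the paper's: both rest on the cellwise trace inequality $\max\{|w(a)|,|w(b)|\}\le(m+1)|I|^{-1/2}\|w\|_{L^2(I)}$ and the cellwise derivative inequality $\|\partial_x w\|_{L^2(I)}\le(m+1)\sqrt{m(m+2)}\,|I|^{-1}\|w\|_{L^2(I)}$ for $w\in P^m(I)$, which the paper simply cites from the literature while you rederive them via Legendre expansions with exactly the same constants. Your remark that (c) must carry one more power of $h^{-1}$ than (a) is correct---the $h^{-2}$ in the stated (c) is a typo for $h^{-3}$, which is also what the scaling in the proof of Theorem~\ref{th4.2} actually uses.
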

\begin{proof} These follow from the repeated use of the two inverse inequalities:
\begin{subequations}
\begin{align}
& \max\{|w(a)|, |w(b)|\} \leq (m+1)|I|^{-1/2}\|w\|_{L^2(I)},\\
& \|\partial_x w\|_{L^2(I)}\leq (m+1) \sqrt{m(m+2)}|I|^{-1}\|w\|_{L^2(I)},
\end{align}
\end{subequations}
provided $w \in P^m(I)$  with $I=(a, b)$ and $|I|=b-a$. The first bound is well known, see e.g. \cite{WH03}.
The second inequality may be found in \cite[Lemma 3.1]{LP15}
\end{proof}

\begin{thm}\label{th4.2}
Let the fully discrete entropy be defined as
$$
E^n =\sum_{j=1}^N\int_{I_j} \left(\Phi(x) u_h^n(x)+H(u_h^n(x))\right)dx.
$$
The DG scheme \eqref{fully_DG},  subject to zero-flux boundary condition, satisfies
 \begin{align}\label{en+}
D_t E^n\leq -\frac{\gamma}{2} \|q_h^n\|_E^2
\end{align}
for some $\gamma \in (0, 1)$, provided $u_h^n(x)$ remains positive, $\beta_0>\Gamma(\beta_1)$, and
\begin{align}\label{cfl+}
 \mu \leq \frac{\gamma }{C(k, \beta_0, \beta_1)\|\max\{0, H''(u_h^n(\cdot))\}\|_\infty\|f(u_h^n(\cdot))\|_\infty},
 \end{align}
 where $C(k, \beta_0, \beta_1)$ is given in (\ref{ck}) below.
\end{thm}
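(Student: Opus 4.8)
The plan is to mirror the purely algebraic part of the semi-discrete computation of Theorem 3.1, and then to pay for the explicit time stepping through a quadratic remainder that the CFL condition \eqref{cfl+} is precisely designed to absorb. Since $u_h^{n+1}-u_h^n=\Delta t\,D_t u_h^n\in V_h$, I would first expand the entropy difference by a second-order Taylor expansion of $H$ about $u_h^n$,
$$
E^{n+1}-E^n=\sum_{j=1}^N\int_{I_j}\big(\Phi+H'(u_h^n)\big)(u_h^{n+1}-u_h^n)\,dx+\tfrac12\sum_{j=1}^N\int_{I_j}H''(\xi)(u_h^{n+1}-u_h^n)^2\,dx,
$$
with $\xi$ between $u_h^n$ and $u_h^{n+1}$. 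For the linear term I would test \eqref{fully_DG} with $r=D_t u_h^n$ and then $v=q_h^n$; this reproduces exactly the global bilinear form appearing in the proof of Theorem 3.1 evaluated at $q_h^n$, so its algebraic estimate (Young's inequality together with the definition \eqref{b0} of $\Gamma(\beta_1)$) yields $\sum_j\int_{I_j}(\Phi+H'(u_h^n))(u_h^{n+1}-u_h^n)\,dx=\Delta t\int_\Omega q_h^n\,D_t u_h^n\,dx\le-\gamma\,\Delta t\,\|q_h^n\|_E^2$. Here the standing positivity of $u_h^n$ is what guarantees $f(u_h^n)\ge0$, hence $\|q_h^n\|_E^2\ge0$ and the validity of this step.

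Next I would control the quadratic remainder. Bounding $H''(\xi)$ by $\|\max\{0,H''(u_h^n(\cdot))\}\|_\infty$ over the relevant range and writing $u_h^{n+1}-u_h^n=\Delta t\,D_t u_h^n$ reduces everything to an estimate of $\|D_t u_h^n\|_{L^2(\Omega)}^2$. The crux of the proof is the inverse-type bound
$$
\|D_t u_h^n\|_{L^2(\Omega)}^2\le\frac{C(k,\beta_0,\beta_1)\,\|f(u_h^n)\|_\infty}{h^2}\,\|q_h^n\|_E^2 .
$$
To obtain it I would take $v=D_t u_h^n$ in \eqref{fully_DG}, so that $\|D_t u_h^n\|_{L^2}^2$ equals the same global bilinear form, now paired against $w:=D_t u_h^n$ rather than $q_h^n$. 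Each resulting piece --- the volume term $\int f\,\partial_x q_h^n\,\partial_x w$, the penalty term $\beta_0\{f\}[q_h^n][w]/h$, and the consistency terms containing $\{\partial_x q_h^n\}$, $[\partial_x^2 q_h^n]$ and $\{\partial_x w\}$ --- is split by Cauchy--Schwarz into a $q_h^n$-factor, which I would absorb into $\|q_h^n\|_E$, and a $w$-factor, to which the inverse inequalities \eqref{in} apply. Each $w$-factor is thereby bounded by $\sqrt{\|f\|_\infty}\,h^{-1}\|w\|_{L^2}$ up to a constant depending only on $k,\beta_0,\beta_1$; dividing through by $\|w\|_{L^2}$ gives the stated bound and fixes the constant $C(k,\beta_0,\beta_1)$.

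I expect the main obstacle to be precisely this last estimate, and specifically the reconciliation of the weights. The interface quantities $\{\partial_x q_h^n\}$ and $[\partial_x^2 q_h^n]$ must be controlled by the \emph{weighted} volume energy $\sum_j\int_{I_j}f(u_h^n)|\partial_x q_h^n|^2$ inside $\|q_h^n\|_E^2$, whereas the trace and derivative inverse inequalities of \eqref{in} naturally produce the \emph{unweighted} $\sum_j\int_{I_j}|\partial_x q_h^n|^2$. I would bridge this gap exactly as in Remark 3.2, invoking for small $h$ the comparability of $\{f(u_h^n)\}$ and $f(u_h^n)$ on adjacent cells (so that $\{f\}$ may be traded for the cellwise $f$ up to a fixed factor); this leaves $\|f\|_\infty$ as the only $f$-dependence in the final constant, which is the reason the mesh ratio in \eqref{cfl+} depends on $\|f(u_h^n)\|_\infty$ but on no finer information about $f$. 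Some care is also needed in the $H''(\xi)$ bound, since $\xi$ lies between $u_h^n$ and $u_h^{n+1}$; this is handled by the $\max\{0,\cdot\}$ truncation, which discards the favorable regions where $H$ is concave.

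Finally I would assemble the two contributions. Combining the linear estimate with the remainder bound and using $\Delta t^2/h^2=\mu\,\Delta t$ gives
$$
D_t E^n\le-\gamma\,\|q_h^n\|_E^2+\tfrac12\,C(k,\beta_0,\beta_1)\,\|\max\{0,H''(u_h^n)\}\|_\infty\,\|f(u_h^n)\|_\infty\,\mu\,\|q_h^n\|_E^2 .
$$
The CFL condition \eqref{cfl+} is exactly the requirement that the second term not exceed $\tfrac{\gamma}{2}\|q_h^n\|_E^2$, which yields \eqref{en+} and completes the argument.
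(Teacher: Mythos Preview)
Your plan is essentially the paper's own proof: Taylor-expand $E^{n+1}-E^n$, test \eqref{fully_DG} with $r=D_tu_h^n$ and $v=q_h^n$ to bound the linear part by $-\gamma\Delta t\|q_h^n\|_E^2$ via the algebra of Theorem~3.1, then test with $v=D_tu_h^n$ and combine Young's inequality with the inverse inequalities \eqref{in} to obtain $\|D_tu_h^n\|_{L^2}^2\le C\|f\|_\infty h^{-2}\|q_h^n\|_E^2$. The final assembly under \eqref{cfl+} is identical.

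The one place where you deviate is the ``reconciliation of the weights'' for the consistency terms. You propose invoking the $\{f\}/f$ comparability of Remark~3.2, which imports an extra small-$h$ hypothesis not present in the theorem. The paper avoids this: after Young's inequality it keeps the full flux $\{f\}\,|\widehat{\partial_xq_h^n}|^2$ intact, expands $|\widehat{\partial_xq_h^n}|^2\le 2\big(\beta_0^2[q_h^n]^2/h^2+(\{\partial_xq_h^n\}+\beta_1h[\partial_x^2q_h^n])^2\big)$, and then observes that, by the very definition \eqref{b0} of $\Gamma$ (read with $2\beta_1$ in place of $\beta_1$),
\[
\{f(u_h^n)\}\big(\{\partial_xq_h^n\}+\beta_1h[\partial_x^2q_h^n]\big)^2\Big|_{x_{j+1/2}}\le\frac{\Gamma(2\beta_1)}{2h}\Big(\int_{I_j}+\int_{I_{j+1}}\Big)f(u_h^n)|\partial_xq_h^n|^2\,dx,
\]
which lands directly inside $\|q_h^n\|_E^2$ with the correct $f$-weight. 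This is why $\Gamma(2\beta_1)$ appears in the constant $C(k,\beta_0,\beta_1)$ of \eqref{ck}. Your route would work under the additional comparability assumption but produces a different constant; the paper's trick is cleaner and matches the stated hypotheses exactly.
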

\begin{proof}
Summing \eqref{fully_DG} over all index $j$'s we obtain
\begin{align}
\sum_{j=1}^N\int_{I_j} q_h^n r \,dx
& = \sum_{j=1}^N\int_{I_j}\left(\Phi(x) +H'(u_h^n) \right) r\,dx,\label{q_sum}\\
\sum_{j=1}^N\int_{I_j} D_t u_h^n  v\, dx & = - \sum_{j=1}^N \int_{I_j} f(u_h^n)  \partial_xq_h^n \partial_x v\,dx
	- \sum_{j=1}^{N-1} \{f(u_h^n) \} \left.\left(\widehat{\partial_xq^n_h} [v]+ \{ \partial_x v\}[q^n_h]\right)\right|_{x_{j+\frac12}}. \label{rho_sum}
\end{align}
Take $ r =D_t u_h^n $  in (\ref{q_sum}) to obtain
\begin{align*}
 \int_\Omega D_t u_h^n q_h^n dx  & =\int_\Omega \left( \Phi(x)+H'(u_h^n(x))  \right)D_t u_h^n\,dx \\
 & =D_t E^n -\frac{1}{\Delta t} \int_{\Omega} ( H(u^{n+1}_h) - H(u^n_h) - H'(u_h^n) (u^{n+1}_h -u_h^n))dx \\
& = D_t E^n -\frac{\Delta t}{2} \int_{\Omega}H''(\cdot)(D_t u_h^n)^2dx.
\end{align*}
Here $(\cdot)$ denotes the intermediate value between $u_h^n$ and $u_h^{n+1}$. Taking $v =q_h^n$, (\ref{rho_sum}) becomes
\begin{align*}
 \int_\Omega D_t u_h^n q_h^n dx  &= -\sum_{j=1}^N\int_{I_j} f(u_h^n) |\partial_xq_h^n |^2\,dx -\sum_{j=1}^{N-1} \{f(u^n_h) \} [q_h^n]\left.\left( \widehat{\partial_xq_h^n}+\{\partial_xq_h^n\}\right)\right|_{x_{j+\frac12}}\\
& \leq  -\gamma \|q_h^n\|_E^2,
\end{align*}
for $\beta_0$ satisfying (\ref{b0}) at each interface $x_{j+\frac12}$, $j=1,\ldots, N-1.$ Hence
$$
D_t E^n\leq -\gamma \|q_h^n\|_E^2 + \frac{\Delta t}{2} \int_{\Omega}  H''(\cdot) (D_t u_h^n)^2\,dx.
$$
The claimed estimate follows if
\begin{equation}\label{cfl}
\Delta t \leq \frac{\gamma \|q_h^n\|_E^2}{\int_{\Omega} \max\{0, H''(\cdot)\} (D_t u_h^n)^2\,dx}.
\end{equation}
For convex $H$, this indeed imposes a time restriction.

It remains to show that the bound in (\ref{cfl+}) is smaller than the right side of (\ref{cfl}).   In \eqref{rho_sum},  we take $v=D_tu^n_h$ and use the Young inequality $ab \leq \frac{1}{4\epsilon}a^2+\epsilon b^2$ to obtain
\begin{align*}
\sum_{j=1}^N\int_{I_j} v^2\, dx & = - \sum_{j=1}^N \int_{I_j} f(u_h^n)  \partial_xq_h^n \partial_x v\,dx
	- \sum_{j=1}^{N-1} \{f(u_h^n) \} \left.\left(\widehat{\partial_xq^n_h} [v]+ \{ \partial_x v\}[q^n_h]\right)\right|_{x_{j+\frac12}}\\
&\leq \frac{1}{4\epsilon_1h^2} \sum_{j=1}^N \int_{I_j} f^2(u_h^n)  |\partial_xq_h^n|^2 \,dx +\epsilon_1 h^2 \sum_{j=1}^N \int_{I_j} |\partial_x v| ^2\,dx\\
   & +  \frac{1}{4\epsilon_2h} \sum_{j=1}^{N-1} \left.\{f(u_h^n) \}^2 |\widehat{\partial_xq^n_h}|^2 \right|_{x_{j+\frac12}} +\epsilon_2h \sum_{j=1}^{N-1}\left. { [v]^2}\right|_{x_{j+\frac12}} \\
   & + \frac{1}{4\epsilon_3h^3} \sum_{j=1}^{N-1} \left.\{f(u_h^n) \}^2 [q^n_h]^2\right|_{x_{j+\frac12}}  +\epsilon_3h^3 \sum_{j=1}^{N-1} \left.\{\partial_xv\}^2\right|_{x_{j+\frac12}} .
\end{align*}
The use of inequalities in (\ref{in}) leads to
\begin{align*}
& \epsilon_1  h^2 \sum_{j=1}^N \int_{I_j} |\partial_x v| ^2\,dx
    + \epsilon_2 h  \sum_{j=1}^{N-1} \left.{ [v]^2}\right|_{x_{j+\frac12}}
    + \epsilon_3h^3  \sum_{j=1}^{N-1}\left. [\partial_xv]^2 \right|_{x_{j+\frac12}} \\
  &    \leq(k+1)^2   ( k(k+2) \epsilon_1 + 4\epsilon_2+ k^3(k+2) \epsilon_3)
    \sum_{j=1}^N\int_{I_j} v^2\, dx \\
    & = \frac{3}{4}  \sum_{j=1}^N\int_{I_j} v^2\, dx,
 \end{align*}
 provided
 $$
 (4\epsilon_1)^{-1} =k(k+1)^2(k+2),  \; (4\epsilon_2)^{-1} =4 (k+1)^2, \quad (4\epsilon_3)^{-1} =k^3(k+1)^2(k+2).
 $$
 This gives
\begin{align}\label{inv}
& \frac{1}{4}\sum_{j=1}^N\int_{I_j} v^2\, dx \leq \frac{k(k+1)^2(k+2)}{h^2}\sum_{j=1}^N \int_{I_j} f^2(u_h^n)  |\partial_xq_h^n|^2 \,dx \\ \notag
                                           & \qquad+\frac{k^3(k+1)^2(k+2)}{h^3} \sum_{j=1}^{N-1} \left.\{f(u_h^n) \}^2 [q^n_h]^2\right|_{x_{j+\frac12}}+\frac{4(k+1)^2}{h}\sum_{j=1}^{N-1} \left.\{f(u_h^n) \}^2 |\widehat{\partial_xq^n_h}|^2 \right|_{x_{j+\frac12}}.
\end{align}
It is clear that the first two terms are bounded by $\|f(u_h^n(\cdot)\|_\infty \|q_h^n\|_E^2$.  We now show that the last term is also bounded by $\|f(u_h^n(\cdot)\|_\infty \|q_h^n\|_E^2$, up to constant multiplication factors.
\begin{align*}
\sum_{j=1}^{N-1} \left. \{f(u_h^n) \} |\widehat{\partial_xq^n_h}|^2\right|_{x_{j+\frac12}} &
=
 \sum_{j=1}^{N-1} \left.\{f(u_h^n) \} \left|\{\partial_xq_h^n\}+\beta_0\frac{[q^n_h]}{h}+\beta_1h[\partial_x^2q^n_h]\right|^2
\right|_{x_{j+\frac12}}\\
&\leq  2 \sum_{j=1}^{N-1} \left. \{f(u_h^n) \} \left(\beta_0^2\frac{[q^n_h]^2}{h^2}+\left(\{\partial_xq^n_h\}+\beta_1h[\partial_x^2q^n_h]\right)^2
\right)\right|_{x_{j+\frac12}}.
\end{align*}
From (\ref{b0}) it follows that
$$
 \left.\{f(u_h^n) \}\left(\{\partial_xq^n_h\}+\beta_1h[\partial_x^2q^n_h]\right)^2\right|_{x_{j+\frac12}}  \leq \frac{ \Gamma(2\beta_1) }{2h}
\left(\int_{I_j}+\int_{I_{j+1}}\right)f(u_h)|\partial_xq_{h}|^2dx.
$$
Hence
$$
 \sum_{j=1}^{N-1} \left.\{f(u_h^n) \}\left(\{\partial_xq^n_h\}+\beta_1h[\partial_x^2q^n_h]\right)^2\right|_{x_{j+\frac12}} \leq \frac{ \Gamma(2\beta_1)}{h}
\sum_{j=1}^N\int_{I_j}f(u_h)|\partial_xq_{h}|^2dx.
$$
These together yield
\begin{align*}
 \sum_{j=1}^{N-1} \left. \{f(u_h^n) \} |\widehat{\partial_xq^n_h}|^2\right|_{x_{j+\frac12}}  \leq \frac{2}{h} \max\{\beta_0, \Gamma(2\beta_1)\} \|q^n_h\|_E^2.
\end{align*}
Upon insertion into (\ref{inv}) we obtain
\begin{align*}
\sum_{j=1}^N\int_{I_j} v^2\, dx \leq  \frac{C(k, \beta_0, \beta_1) ||f(u_h^n(\cdot))||_\infty}{h^2}\|q^n_h\|_E^2,
\end{align*}
where
\begin{equation}\label{ck}
C(k, \beta_0, \beta_1):= 4(k+1)^2 \left(k(k+2)\max\{1, k^2/\beta_0\}+ 8\max\{\beta_0, \Gamma(2\beta_1)\} \right).
\end{equation}
Hence (\ref{cfl}) is implied by (\ref{cfl+}).

This ends the proof.
\end{proof}
\subsection{Preservation of positive cell averages}  It is known to be difficult, if not impossible, to preserve point-wise solution bounds for high order numerical approximations.  A popular strategy after the work  \cite{ZS10} is to combine an accuracy preserving reconstruction with the bound preserving property of cell averages.
For the DG scheme applied to  (\ref{fp}) with $\Phi=0$, following \cite{LY14b},  we are able to identify a range of $\beta_1$ so that positive averages are ensured for at least the third order scheme.   We have not been able to prove this property for the general case.

 By taking the test function $v = 1$ on $I_j$ in  (\ref{fully_DG}b), we obtain the evolutionary equation for the cell average,
\begin{equation}\label{ca}
 \bar u^{n+1}_j=  \bar u^n_j  + \mu h\left. \{f(u_h^n)\} \widehat{\partial_x q_h^n}\right|_{\partial I_j}.
\end{equation}
 {For the case that $H$ is convex and  $\Phi(x)=0$,  we reformulate (\ref{fp+}) as
\begin{equation*}
\partial_t u=\partial_x (fH''\partial_x q), \quad  q=u.
\end{equation*}
At the discrete level, we simply set $q_h=u_h$ and replace $f$ by $fH''$ in  (\ref{fully_DG}b).}
Assuming that $ \bar u^n_j \in [c_1, c_2]$ for all $j$'s, we can derive some sufficient conditions such that  $ \bar u^{n+1}_j \in [c_1, c_2] $ under certain CFL condition on $\mu$.

For piecewise quadratic polynomials, we have the following result.
\begin{thm}\label{thk2}($k=2$) The scheme (\ref{ca}) with $q_h=u_h$,  and
\begin{align}\label{betak2}
\frac{1}{8}< \beta_1 < \frac{1}{4} \quad \text{and} \quad  \beta_0 \geq 1
\end{align}
is bound preserving, namely, $\bar{u}_{j}^{n+1}\in [c_1, c_2]$  if $u_h^n(x) \in [c_1, c_2]$ on the set $S_j$'s where
$$
S_j = x_j +\frac{h}{2}\left\{-1, 0, 1\right\},
$$
under the CFL condition
\begin{align}\label{cf}
\mu \leq \mu_0= \frac{1}{12\max_{1\leq j\leq N}  |f(u_{j-1/2}^n)|} \min \left\{
	\frac{1}{\beta_0 +8\beta_1-2 },
	\frac{1}{ 1-4\beta_1}
	\right\}.
\end{align}
\end{thm}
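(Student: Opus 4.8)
The plan is to realize $\bar u_j^{n+1}$ as a \emph{convex combination} of the nodal values $\{u_h^n(x): x\in S_{j-1}\cup S_j\cup S_{j+1}\}$, each of which lies in $[c_1,c_2]$ by hypothesis. Since a convex combination of numbers in $[c_1,c_2]$ stays in $[c_1,c_2]$, this is exactly the assertion $\bar u_j^{n+1}\in[c_1,c_2]$. The whole argument is therefore a bookkeeping computation of nine stencil coefficients followed by a sign check.

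First I would fix the local representation. On $I_j$ write $x=x_j+\tfrac h2\xi$, $\xi\in[-1,1]$, and represent the quadratic $u_h^n|_{I_j}$ by its values $u_j^L,u_j^C,u_j^R$ at $\xi=-1,0,1$, i.e.\ at the three points of $S_j$. Lagrange interpolation then gives exact nodal formulas for every quantity entering the flux \eqref{flux}: the cell average is the Simpson combination $\bar u_j^n=\tfrac16(u_j^L+4u_j^C+u_j^R)$ (weights already nonnegative), while the endpoint values of $\partial_x u_h$, the constant $\partial_x^2 u_h$, and hence the interface quantities $[u_h]$, $\{\partial_x u_h\}$, $[\partial_x^2 u_h]$ become explicit linear combinations of the six nodal values from the two adjacent cells. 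Substituting these into $G_{j+1/2}:=h\,\widehat{\partial_x u_h}_{j+1/2}$ turns $G_{j+1/2}$ into a fixed linear form in $u_j^L,\dots,u_{j+1}^R$ whose coefficients depend only on $\beta_0,\beta_1$.

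Next I would insert $G$ into the update \eqref{ca}, written as $\bar u_j^{n+1}=\bar u_j^n+\mu(\{f\}_{j+1/2}G_{j+1/2}-\{f\}_{j-1/2}G_{j-1/2})$ with $\{f\}_{j\pm1/2}\ge0$, and collect the coefficient of each of the nine nodal values. Two observations drive everything. First, the coefficients sum to one, because the stencil $G$ annihilates constants (its six coefficients sum to zero), so the flux terms contribute nothing to the total; it therefore suffices to prove nonnegativity of all nine coefficients. Second, the six coefficients attached to the neighboring cells $I_{j\pm1}$ equal, up to the nonnegative factors $\mu\{f\}_{j\pm1/2}$, one of $2-8\beta_1$, $4\beta_1-\tfrac12$, $\beta_0+4\beta_1-\tfrac32$; these are exactly positive when $\tfrac18<\beta_1<\tfrac14$ (the first two) and $\beta_0\ge1$ (the third, since then $\beta_0+4\beta_1>\tfrac32$). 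This is precisely the role of the window \eqref{betak2}, and these six coefficients stay nonnegative for \emph{every} $\mu$.

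The remaining work, and the only place the CFL enters, is the three diagonal coefficients of $u_j^L,u_j^C,u_j^R$, where the positive Simpson weights $\tfrac16,\tfrac23,\tfrac16$ compete with negative contributions from the two neighboring fluxes. Writing $a_{j\pm1/2}=\mu\{f\}_{j\pm1/2}$, nonnegativity of the center coefficient reads $(2-8\beta_1)(a_{j+1/2}+a_{j-1/2})\le\tfrac23$, while the endpoint coefficients give $(4\beta_1-\tfrac12)a_{j+1/2}+(\beta_0+4\beta_1-\tfrac32)a_{j-1/2}\le\tfrac16$ together with its mirror (exchange $a_{j+1/2}\leftrightarrow a_{j-1/2}$). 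Bounding $\{f\}_{j\pm1/2}\le\max_j|f(u_{j-1/2}^n)|$ and taking the worst case reduces the center condition to $\mu\lesssim 1/\big(\max|f|\,(1-4\beta_1)\big)$ and the endpoint conditions to $\mu\lesssim 1/\big(\max|f|\,(\beta_0+8\beta_1-2)\big)$, whose conjunction is a CFL of the form \eqref{cf}. With all nine coefficients nonnegative and summing to one, $\bar u_j^{n+1}$ is a convex combination of nodal values in $[c_1,c_2]$, proving the claim. The main obstacle is purely computational: assembling the nine coefficients correctly and verifying that the window \eqref{betak2} is tight enough to make the six neighbor coefficients nonnegative while leaving the $\tfrac16,\tfrac23$ budget to absorb the diagonal terms under \eqref{cf}.
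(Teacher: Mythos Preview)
Your proposal is correct and follows essentially the same route as the paper: express the flux $h\,\widehat{\partial_x u_h}$ at each interface as a fixed linear combination of the six neighboring nodal values (your three quantities $4\beta_1-\tfrac12$, $2-8\beta_1$, $\beta_0+4\beta_1-\tfrac32$ are exactly the paper's $\alpha_1,\alpha_2,\alpha_3$), substitute into the cell-average update together with the Simpson decomposition of $\bar u_j^n$, and verify that the nine resulting weights are nonnegative and sum to one, with \eqref{betak2} handling the neighbor weights and the CFL \eqref{cf} handling the three diagonal ones. The only difference is cosmetic: the paper writes the decomposition explicitly in display \eqref{avedeck2} and extracts the sufficient CFL in closed form, whereas you describe the bookkeeping verbally.
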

\begin{proof}  Let
$$
p(\xi)=u_h\left(x_j+\frac{h}{2}\xi \right) \text{ for } \xi\in [-1,1],
\quad \text{i.e.}, \quad p =u_h|_{I_j},
$$
we have
\begin{equation}\label{avek2}
\bar u_j =  \frac{1}{6} p(-1)+   \frac{2}{3}p(0)+  \frac{1}{6} p(1).
\end{equation}
In what follows we denote $p_-=u_h|_{I_{j-1}}$ and $p_+=u_h|_{I_{j+1}}$.

We  represent the diffusion flux in terms of solution values over the set $S_j$; see \cite{LY14b}.
\begin{align}\label{fluxk}
h \left.\widehat{\partial_xu_h}\right|_{x_{j+\frac12}}
= \alpha_3 p_{+}(-1)+\alpha_2 p_{+}(0)+\alpha_1p_{+}(1)- \left(\alpha_1 p(-1)+\alpha_2 p(0)+\alpha_3p(1)\right),
\end{align}
where
\begin{align}\label{alphas}
\alpha_1= \frac{8\beta_1-1}{2}, \quad
\alpha_2 =2(1-4\beta_1), \quad
\alpha_3=\beta_0 +\frac{8\beta_1-3}{2}.
\end{align}
It is easy to verify that (\ref{betak2}) ensures $\alpha_i \geq 0$ for $i=1, 2, 3$.

Upon substitution into (\ref{ca}) we obtain
\begin{align}\label{avedeck2}
\bar u_j^{n+1} = & \bar u_j + 2 \mu \left(\left. h \{f(u_h)\} \widehat{\partial_xu_h}\right|_{x_{j+\frac12}}
	- \left. h \{f(u_h)\} \widehat{\partial_x u_h}\right|_{x_{j-\frac12}}\right) \\
=&\left[  \frac{1}{6} - 2\mu \left(\alpha_3 f_{j-\frac12} +\alpha_1 f_{j+\frac12} \right) \right]p(-1) \nonumber\\
&+\left[  \frac{2}{3} - 2\mu \left(\alpha_2 f_{j-\frac12} +\alpha_2 f_{j+\frac12}\right)\right]p(0)\nonumber\\
&+\left[  \frac{1}{6} - 2\mu \left(\alpha_1 f_{j-\frac12} +\alpha_3 f_{j+\frac12} \right) \right]p(1) \nonumber\\
&+2\mu f_{j+\frac12}\left[\alpha_3p_{+}(-1)+\alpha_2p_{+}(0)+\alpha_1p_{+}(1) \right]\nonumber \\
&+2\mu f_{j-\frac12}\left[\alpha_1 p_-(-1)+\alpha_2p_-(0)+\alpha_3p_-(1) \right].\nonumber
\end{align}
Here we have used the notation
$$
f_{j+\frac12}:= \left.\{f(u_h)\}\right|_{x_{j+\frac12}} = \left.\frac{f(u_h^-)+f(u_h^+)}{2}\right|_{x_{j+\frac12}}.
$$
Note that the sum of all coefficients of above polynomial values is one.  Hence $\bar u^{n+1}_j\in [c_1, c_2]$ as long as
$u_h^n\in [c_1, c_2]$ on $S_j$ and all coefficients are nonnegative. The nonnegativity imposes a CFL condition
$\mu \leq \mu_0$ with $\mu_0$ being
\begin{align*}
\frac{1}{12} \min_{1\leq j\leq N}\left\{
	\frac{1}{\alpha_3 f_{j-\frac12} + \alpha_1 f_{j+\frac12}},
	\frac{4}{\alpha_2 f_{j-\frac12} + \alpha_2 f_{j+\frac12}},
	\frac{1}{\alpha_1 f_{j-\frac12} + \alpha_3 f_{j+\frac12}}
\right\}.
\end{align*}
Here we assume that $f_{N+1/2}=0$ so that $j=N$ can be included in the above expression. It suffices to take  smaller
$$
\mu_0= \frac{1}{12 \max_{ }|f(u^n_{j-1/2})|} \min\left\{
	\frac{1}{\alpha_3 + \alpha_1},
	\frac{2}{\alpha_2}
\right\}.
$$
That is  (\ref{cf}), as claimed.
\end{proof}

\begin{rem}The CFL condition (\ref{cf}) is  sufficient conditions rather than necessary to preserve the bound of solutions.
Therefore, in practice, these CFL conditions are strictly enforced only in the case the bound preserving property is violated.
\end{rem}
\begin{rem} For general case, we expect there is  still a proper set of parameters $(\beta_0,\beta_1)$ with which the scheme can  preserve
positivity of cell averages. Our numerical simulations in Example 2 confirms this expectation.
\end{rem}

\subsection{Preservation of steady states}
If we start with an initial data $u_h^0$, already at steady states, i.e., $\Phi(x) +H'(u_h^0(x))=C$,  it follows from (\ref{fully_DG}a)
that $q_h^0=C$.  Furthermore,   (\ref{fully_DG}b) implies that $u_h^1=u_h^0\in V_h$. By induction we have
$$
 \Phi(x) +H'(u_h^n(x))=C\quad \forall n\in \mathbb{N}.
$$
This says that the DG scheme (\ref{fully_DG}a) preserves the steady states.    Moreover, we can show that in some cases  the numerical solution tends
asymptotically toward a steady state, independent of initial data.  More precisely, we have the following result.
\begin{thm}  Let the assumptions in Theorem \ref{th4.2} be met,  and $(u_h^n, q_h^n) $ be the numerical solution to the fully discrete DG scheme (\ref{fully_DG}),   then the limits of $(u_h^n, q_h^n)$ as  $n\to \infty$ satisfy
$$
q^*_h = C, \quad \Phi(x) +H'(u^*_h) \in C+ V_h^\bot,
$$
where $C$ is a constant. For quadratic $H(u)$,  $C$ can be determined  explicitly by
$$
C=\frac{1}{|\Omega|}{\int_{\Omega} (\Phi(x) + H'(u_0)(x))dx}.
$$
In addition, if $\Phi(x)\in P^m (m \leq k)$, then we must have $\Phi(x) +H'(u^*_h(x)) \equiv C$.
\end{thm}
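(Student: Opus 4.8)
The plan is to read off everything from the fully-discrete entropy dissipation law of Theorem~\ref{th4.2}, $D_t E^n\le -\frac{\gamma}{2}\|q_h^n\|_E^2$. First I would observe that the right-hand side is nonpositive, so $E^n$ is nonincreasing, while $E^n$ is bounded from below: by positivity of $u_h^n$ and continuity of $\Phi$ on the compact $\overline{\Omega}$ one has $\int_\Omega \Phi u_h^n\,dx\ge (\min_{\overline\Omega}\Phi)\int_\Omega u_h^n\,dx$, and for the convex $H$ arising in the models $\int_\Omega H(u_h^n)\,dx$ is bounded below; using the mass conservation established below these bounds are uniform in $n$. A bounded monotone sequence converges, so telescoping the dissipation inequality gives $\frac{\gamma\Delta t}{2}\sum_{n\ge 0}\|q_h^n\|_E^2\le E^0-\lim_m E^m<\infty$, whence $\|q_h^n\|_E\to 0$.

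Next I would identify $q_h^*$. Because $f(u_h^n)\ge 0$ and $\beta_0>0$, both terms in the seminorm \eqref{qe} are nonnegative, so $\|q_h^n\|_E\to 0$ forces $\sum_j\int_{I_j} f(u_h^n)|\partial_x q_h^n|^2\,dx\to 0$ and $\sum_j\{f(u_h^n)\}\frac{\beta_0}{h}[q_h^n]^2\to 0$ separately. Passing to the limit (along a convergent subsequence if needed, the limits being assumed to exist) and using $f(u_h^*)>0$, this yields $\partial_x q_h^*=0$ on each cell and $[q_h^*]=0$ at each interface; hence $q_h^*$ is continuous and piecewise constant, i.e. a single global constant $C$. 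Letting $n\to\infty$ in (\ref{fully_DG}a), which says $q_h^n$ is the $L^2$-projection of $\Phi+H'(u_h^n)$ onto $V_h$, gives $\int_{I_j}(\Phi+H'(u_h^*)-C)\,r\,dx=0$ for every $r\in V_h$, that is, $\Phi+H'(u_h^*)\in C+V_h^\bot$.

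To pin down $C$ I would first record mass conservation: taking the global test function $v\equiv 1$ in (\ref{fully_DG}b) annihilates every derivative and jump, and the zero-flux boundary condition kills the boundary contributions, so $\int_\Omega D_t u_h^n\,dx=0$ and therefore $\int_\Omega u_h^n\,dx=\int_\Omega u_0\,dx$ for all $n$. Testing (\ref{fully_DG}a) with $r\equiv 1$ and letting $n\to\infty$ gives $C|\Omega|=\int_\Omega(\Phi+H'(u_h^*))\,dx$. When $H$ is quadratic, $H'$ is affine, so $\int_\Omega H'(u_h^*)\,dx$ depends on $u_h^*$ only through its conserved total mass and equals $\int_\Omega H'(u_0)\,dx$; this produces $C=\frac{1}{|\Omega|}\int_\Omega(\Phi+H'(u_0))\,dx$.

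Finally, for the refinement I would upgrade the orthogonality to an identity. The relation $\Phi+H'(u_h^*)-C\in V_h^\bot$ becomes $\Phi+H'(u_h^*)-C\equiv 0$ as soon as the same function also lies in $V_h$, since $V_h\cap V_h^\bot=\{0\}$. With $H$ quadratic, $H'$ is affine, so $H'(u_h^*)=\alpha u_h^*+\beta\in V_h$ because $u_h^*\in V_h$; and $\Phi\in P^m\subset V_h$ exactly because $m\le k$. Hence $\Phi+H'(u_h^*)\equiv C$. \emph{The main obstacle} is the convergence step---securing the lower bound on $E^n$ that makes $E^n$ converge and hence $\|q_h^n\|_E\to 0$---together with the observation that this last identity genuinely needs $H'(u_h^*)\in V_h$: for a nonlinear $H'$ the residual $\Phi+H'(u_h^*)-C$ has vanishing moments against $P^k$ on each cell yet need not vanish pointwise, which is why the pointwise steady-state identity is confined to the affine (quadratic $H$) case.
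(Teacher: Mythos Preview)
Your proposal is correct and follows essentially the same route as the paper: use the fully-discrete dissipation inequality to deduce that $E^n$ converges, infer $\|q_h^n\|_E\to 0$, split the seminorm into its two nonnegative pieces to conclude $q_h^*$ is a global constant, and then read off the orthogonality relation from (\ref{fully_DG}a) and the value of $C$ from mass conservation. You actually supply more detail than the paper in two places: you spell out the mass-conservation step via the test function $v\equiv 1$, and you prove the final ``in addition'' claim (via $\Phi+H'(u_h^*)-C\in V_h\cap V_h^\bot=\{0\}$ when $H$ is quadratic and $\Phi\in P^m$, $m\le k$), which the paper's proof does not explicitly address.
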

\begin{proof}
Since $E^n$ is non-increasing and bounded from below,  we have
$$
\lim_{n \to \infty} E^n=\inf\{E^n\}.
$$
Observe from (\ref{en+}) that
$$
E^{n+1} -E^{n} \leq -\frac{\gamma \Delta t}{2} \|q_h^n\|_E^2 \leq 0.
$$
When passing the limit $n\to \infty$ we have $\lim_{n\to \infty} \|q_h^n\|_E^2=0$.  This implies that
each term in this energy norm must have zero as its limit, that is
\begin{equation}\label{limitn}
\lim_{n\to \infty} \sum_{j=1}^N \int_{I_j} f(u_h^n) |\partial_x q_h^n|^2dx=0, \quad \lim_{n\to \infty} \sum_{j=1}^{N-1} \frac{\beta_0}{h}\{f(u_h^n)\}[q_h^n]^2
\Big|_{j+\frac12}=0.
\end{equation}
The first relation in (\ref{limitn}) tells that  the limit of $q_h^n$, denoted by  $q_h^*$, must be constant in each computational cell.
The second relation in (\ref{limitn}) infers that $q_h^*$  must be a constant in the whole domain.  These when inserted into (\ref{fully_DG}a) gives the
desired result.  For quadratic $H(u)$, we use the mass conservation $\int_{\Omega} H'(u^*_h(x))dx=\int_{\Omega} H'(u_0(x))dx$ to determine the constant $C$.
The proof is complete.
\end{proof}
\begin{rem}The above result shows that for quadratic $H(u)$ and potential $\Phi(x)$ being polynomials of degree up
to $k$, the steady states are approached by numerical solutions.  For other cases, such asymptotic convergence holds only in the projection sense.
\end{rem}

\section{Numerical implementation}
In this section, we provide further details in implementing the entropy satisfying discontinuous Galerkin (ESDG) method.

\subsection{Reconstruction} For a high order polynomial approximation, numerical solutions can have negative values.
We enforce the solution positivity through some accuracy-preserving reconstruction. Motivated by the definite result on the bound preserving property of cell averages for special cases in Theorem \ref{thk2},  we  consider the case with positive cell averages.

Let $w_h \in  P^k(I_j)$  be an approximation to a smooth function $w(x) \geq 0$, with cell averages $\bar{w}_j>\delta$
for $\delta$ being some small positive parameter or zero.  We then reconstruct  another polynomial in $P^k(I_j)$
so that
\begin{equation}\label{ureconstruct}
\tilde w_h^{\delta}(x)= \bar{w}_j+\frac{\bar{w}_j-\delta}{\bar{w}_j-\min_{I_j} w_h(x)} (w_h(x)-\bar{w}_j),\quad
 \text{ if } \min_{I_j} w_h(x)<\delta.
\end{equation}
This reconstruction maintains same cell averages and satisfies  $$\min_{I_j} w^\delta(x)\geq\delta.$$
It is known that enforcing a maximum principle numerically might damp oscillations in numerical solutions, see, e.g.
\cite{LO96, ZS10}.
Numerical example in Fig.\ref{fig:porousm2exact} confirms such a damping effect  near zero from using the positivity
preserving limiter (\ref{ureconstruct}).

\begin{lem} If $\bar{w}_j>\delta$,  then the reconstruction satisfies the estimate
$$|w^{\delta}(x)-w_h(x)|\leq C(k) \left( ||w_h(x)-w(x)||_\infty+ \delta\right),\quad \forall x\in I_j,$$
where $C(k)$ is a constant depending on $k$.  This says that the reconstructed $w^{\delta}(x,t)$ in \eqref{ureconstruct}
does not destroy the accuracy when $\delta<h^{k+1}$.
\end{lem}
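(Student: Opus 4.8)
The plan is to reduce the whole estimate to a single scale-invariant inequality for polynomials. First observe that the reconstruction only alters $w_h$ when $m := \min_{I_j} w_h < \delta$; when $m \geq \delta$ we have $w^\delta = w_h$ and the bound is trivial, so I assume $m < \delta$ throughout. Writing $\theta = (\bar w_j - \delta)/(\bar w_j - m)$, a direct substitution into \eqref{ureconstruct} gives the pointwise identity
\[
w^\delta(x) - w_h(x) = -(1-\theta)(w_h(x) - \bar w_j), \qquad 1 - \theta = \frac{\delta - m}{\bar w_j - m}.
\]
Since $m < \delta < \bar w_j$, both factors are positive and $1-\theta \in (0,1)$, so the task becomes bounding $\tfrac{\delta - m}{\bar w_j - m}\,|w_h(x) - \bar w_j|$.

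Next I would establish the key inverse-type estimate: there is a constant $C(k)$, independent of $j$ and of the mesh size, such that for every $p \in P^k(I_j)$,
\[
\max_{I_j}|p - \bar p| \leq C(k)\left(\bar p - \min_{I_j} p\right),
\]
where $\bar p$ denotes the cell average. This is a statement about the finite-dimensional space $P^k$, and I would prove it by an affine change of variables to the reference interval $[-1,1]$ followed by a compactness argument: on the quotient $P^k/\{\text{constants}\}$ (equivalently on $\{p : \bar p = 0\}$) the map $p \mapsto \max|p - \bar p|$ is a genuine norm, while $p \mapsto \bar p - \min p$ is continuous, positively homogeneous of degree one, and strictly positive for non-constant $p$ (if $\min p = \bar p$ then $p \equiv \bar p$). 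Minimizing the latter over the unit sphere of the former yields the constant. Both quantities scale linearly in $p$ and are invariant under adding constants, so $C(k)$ is genuinely mesh-independent.

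Applying this estimate with $p = w_h$ collapses the denominator:
\[
|w^\delta(x) - w_h(x)| = \frac{\delta - m}{\bar w_j - m}\,|w_h(x) - \bar w_j| \leq \frac{\delta - m}{\bar w_j - m}\,C(k)(\bar w_j - m) = C(k)(\delta - m).
\]
It then remains to control $\delta - m$. Because $w \geq 0$, at the minimizer $x_0$ of $w_h$ one has $m = w_h(x_0) \geq w(x_0) - \|w_h - w\|_\infty \geq -\|w_h - w\|_\infty$, hence $\delta - m \leq \delta + \|w_h - w\|_\infty$. Combining gives the claimed bound $|w^\delta(x) - w_h(x)| \leq C(k)(\|w_h - w\|_\infty + \delta)$, and since $\|w_h - w\|_\infty = O(h^{k+1})$ for a degree-$k$ approximation of a smooth $w$, the reconstruction preserves $(k+1)$-th order accuracy whenever $\delta < h^{k+1}$.

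I expect the main obstacle to be the inverse-type estimate, specifically verifying that $p \mapsto \bar p - \min p$ does not degenerate, i.e. that it is bounded below by a positive multiple of $\max|p - \bar p|$ uniformly over $P^k$. The compactness argument handles this cleanly, but one must be careful that this functional is only positively homogeneous and not a symmetric norm, so the equivalence has to be argued directly on the unit sphere rather than by quoting a two-sided norm-equivalence theorem.
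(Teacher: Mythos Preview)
Your proof is correct and follows essentially the same route as the paper: both compute $w^\delta - w_h = \frac{\delta - m}{\bar w_j - m}(\bar w_j - w_h)$, bound $\delta - m \leq \delta + \|w_h - w\|_\infty$ using $w \geq 0$, and then invoke the polynomial inequality $\max_{I_j}|p - \bar p| \leq C(k)(\bar p - \min_{I_j} p)$ to absorb the remaining ratio. The only difference is that the paper cites this last inequality from \cite{LY14b, ZS10}, whereas you supply a direct compactness argument on $P^k$ modulo constants, which is a perfectly valid (and self-contained) substitute.
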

\begin{proof}
We have
\begin{align*}
 |w^\delta(x)-w_h(x)|&
                            = \left|   \frac{\delta- \min_{I_j} w_h(x)}{\bar{w}_j-\min_{I_j} w_h(x)} (\bar{w}_j-w_h(x)) \right|\\
                            &\leq  \frac{\max_{I_j} |\bar{w}_j -w_h(x)|}{\max_{I_j} (\bar{w}_j-w_h(x))} \left( ||w_h(x)-w(x)||_\infty+ \delta\right).
\end{align*}
It follows from \cite{LY14b, ZS10} that
  $$\frac{\max_{I_j} |\bar{w}_j-w_h(x)|}{\max_{I_j} (\bar{w}_j-w_h(x))} \leq C(k),$$
  where $k$ is the degree of the polynomial $w_h(x)$.
\end{proof}

\subsection{Time discretization} For the time discretization of (\ref{DG}), we use the explicit high order Runge-Kutta method. The explicit time discretization is simple to implement, with entropy dissipation law still preserved under some restriction on the time step.

 Let $\{t^n\}, n=0, 1,\ldots$ be a uniform partition of time interval.
 Denote $u_h^n \sim u(t_n, x)$, $q_h^n \sim q(t_n, x)$, where $t_n=n\Delta t$ and $\Delta t$ is the uniform temporal step size. The algorithm can be summarized in following steps.
\begin{itemize}
\item[1.] Project $u_0(x)$ onto $V_h$ to obtain $u_h(0)$ and solve (\ref{DG}b)  to obtain $q_h(0)$.
\item[2.] Solve (\ref{DG}a) to obtain $u_h^{n+1}$ with a Runge-Kutta (RK) ODE solver. Perform reconstruction \eqref{ureconstruct} if needed.
\item[3.] Solve (\ref{DG}b) to obtain $q_h^{n+1}$ from the obtained $u_h^{n+1}$.
\item[4.] Repeat steps 2 and 3 until final time $T$.
\end{itemize}

In our numerical simulation we choose  $\Delta t=C(k)h^2$,  where $C(k)$ is smaller for larger $k$.   For the case with zero potential and $k=2$, $C(k)$ is given in Theorem \ref{thk2}.   The choice of the time step $\Delta t \sim h^2$ suggests that we adopt an $m^{th}$ order Runge-Kutta solver with $m\geq(k+1)/2$, so that in the accuracy test  the temporal error is smaller than the spatial error. For polynomials of degree $k=1,2,3$, we use the second order explicit Runge-Kutta method (also called Heun's method) to solve the ODE system $\dot a=\mathfrak{L}(\textbf{a})$:
\begin{align}
	{\textbf{a}^{(1)}} &= \textbf{a}^n + \Delta t \mathfrak{L}(\textbf{a}^n), \nonumber \\
	\textbf{a}^{n+1} &=   \frac{1}{2}\textbf{a}^n + \frac{1}{2}\textbf{a}^{(1)}+ \frac{1}{2} \Delta t\mathfrak{L}( {\textbf{a}^{(1)}}). \nonumber
\end{align}
The bound preserving property for cell averages in Theorem 3.3,  depending on a convex combination of polynomial values in previous time step, works well with the above Runge-Kutta solver since it is simply a convex combination of the forward Euler.

\subsection{Spatial discretization} In this section, we present some further details on the spatial discretization.
The $k$th order basis functions in a 1-D standard reference element $\xi \in [-1, 1]$ are taken as
the Legendre polynomials $\{L_i(\xi)\}_{i=0}^k$,  then the numerical solutions in each cell $x\in I_j$ can be expressed as
\begin{align*}
 u_h(x, t) =\sum_{i=0}^ku_j^i(t) L_i(\xi)=: L^\top(\xi)u_j(t), \quad q_h(x, t)  =\sum_{i=0}^kq_j^i(t)L_i(\xi)=: L^\top(\xi)q_j(t),
\end{align*}
using a uniform mesh size $h$ and the map $x=x_j+\frac{h}{2}\xi$, with notation $L^\top=(L_0, L_1, \cdots, L_k)$ and $u_j=(u_j^0, \cdots, u_j^k)^\top$.

For given $\Phi(x)$, a simple calculation of (\ref{DG}a) with $v=L(\xi)$ gives
\begin{equation}\label{3}
M \dot u_j=\frac{2}{h} R_1+\frac{1}{2h}( R_2+R_3),  \quad 2\leq j \leq N-1,
\end{equation}
where
\begin{align*}
M& = \frac{h}{2}\int_{-1}^1 L(\xi)L^\top(\xi)d\xi,\\
R_1 & =- \sum_{i=1}^{Q}  \omega_i f\left( L^\top(s_i) u_j(t) \right)  L_\xi^\top(s_i)q_j L_\xi(s_i),\\
R_2 & =\left(f\left(L^\top(1)u_j\right)+f\left(L^\top(-1)u_{j+1}\right)\right)( - D^\top  q_j+ E^\top q_{j+1})L(1) \\
& \qquad - \left( f\left( L^\top(1)u_{j-1}\right)+f\left(L^\top(-1)u_j\right) \right)( - D^\top q_{j-1}+ E^\top q_{j})L(-1)=R_2^+-R_2^-, \\
R_3 & =\left( f\left(L^\top(1)u_j\right)+ f\left(L^\top(-1)u_{j+1}\right)\right) ( L^\top(1) q_j- L^\top(-1)q_{j+1})L_\xi(1)\\
&\qquad   + \left( f\left(L^\top(1)u_{j-1}\right)+ f\left(L^\top(-1)u_{j}\right)\right)(L^\top(1) q_{j-1}- L^\top(-1)q_{j})L_\xi(-1)=: R_3^+ + R_3^-.
\end{align*}
Here
$$
D=\beta_0L(1)-L_\xi(1)+4\beta_1L_{\xi\xi}(1), \quad  E =\beta_0L(-1)+L_\xi(-1)+4\beta_1L_{\xi\xi}(-1).
$$
In the evaluation of $R_1$,  we choose $Q$ Gaussian quadrature points $s_i \in[-1, 1]$ with $1\leq i\leq Q$. Here and in what follows, we choose $Q$ quadrature points with $Q\geq \frac{k+2}{2}$ so that the quadrature rule with accuracy of order $\mathcal{O}(h^{2Q})$ does not destroy the scheme accuracy.  At two end cells, if the zero flux conditions are specified, we use  $R_2=R_2^+, R_3=R_3^+$ for $j=1$ and $R_2=-R_2^-, R_3= R_3^-$ for $j=N$.

If Dirichlet boundary conditions, $u(a)$ and $u(b)$,  are specified,
we modify $R_2$ and $R_3$ according to \eqref{IC}. That is, for $j=1$,
\begin{align*}
 R_2 & =R_2^+ -  (f(u(a)) +f\left(L^\top(-1)u_{1}\right))[\beta_0(L^\top(-1)q_1-\Phi(a)-H'(u(a))) + 2L_\xi^\top(-1)q_1 ]L(-1), \\
R_3 & =R_3^+ + (f(u(a)) +f\left(L^\top(-1)u_{1}\right)) [ \Phi(a)+H'(u(a))-L^\top(-1)q_1] L_\xi(-1),
\end{align*}
and  for $j=N$,
\begin{align*}
R_2 & =(f\left(L^\top(1)u_N\right) +f(u(b))) [-\beta_0(L^\top(1)q_N-\Phi(b)-H'(u(b))) + 2L_\xi^\top(1)q_N ]L(1)-R_2^-, \\
R_3 & = f\left(L^\top(1)u_N\right) +f(u(b))) [ L^\top(1)q_N-\Phi(b)-H'(u(b)) ] L_\xi(1)+ R_3^-.
\end{align*}

To solve  (\ref{DG}b) is, using the $Q$-point Gauss quadrature rule on the interval $(-1, 1)$,  to solve
\begin{equation}\label{3+}
M q_j=\frac{h}{2} \sum_{i=1}^{Q} \omega_i (\Phi(x(s_i))+H'(L^\top(s_i)u_j))L(s_i).
\end{equation}
The collection of (\ref{3}) and (\ref{3+}) with $1\leq j\leq N$ forms a nonlinear ODE system, for which we use a Runge-Kutta method.

\section{Numerical Tests}
In this section, we present  a selected set of numerical examples in order to numerically validate our ESDG scheme.  Via several physical models from different applications, we examine the order of accuracy by  numerical convergence tests, while we quantify $l_1$ errors defined by
$$
\|u_h-u_{ref}\|_{l_1}= \sum_{j=1}^N \int_{I_j}|u_h(x) - u_{ref}(x)|dx,
$$
with the integral on $I_j$ evaluated by a $4$-point Gaussian quadrature method and $u_{ref}$ being a reference solution obtained by using a refined mesh size.
It is also demonstrated that the scheme captures well the long-time behavior of underlying solutions, as well as the mass concentration phenomenon in certain applications.

\subsection{Porous medium equation} We consider the porous medium equation of the form
\begin{equation}\label{pm}
\partial_t u =\partial_x^2 (u^m), \quad m>1.
\end{equation}
With this model we will illustrate 1) the scheme's capability in capturing the solution singularity; 2) the positivity preservation proved in Theorem \ref{thk2}.\\

\noindent{\bf Example 1. Capturing singularity}\\
Barenblatt and Pattle independently found an explicit solution of \eqref{pm} when the {Dirac delta function} is used as initial condition {\cite{Barenblatt,Pattle}}.  A special explicit solution which we will use is
\begin{equation}
B_m(x, t)=\max \left \{0, t^{-\alpha}\left(
0.2-\frac{\alpha(m-1)}{2m}\frac{|x|^2}{t^{2\alpha}}
\right)^{\frac{1}{m-1}}\right\},\quad \alpha=\frac{1}{m+1}.
\end{equation}
We compute the solution of \eqref{pm} with initial data $u_0(x)=B_2(x, 0.1)$, with zero flux boundary conditions $\partial_x u(\pm 2, t)=0$.

Fig.\ref{fig:porousm2exact} shows the exact solution and $P^2$ numerical solutions without and with reconstruction {\eqref{ureconstruct} with $\delta$ set to be $0$. This reconstruction is not applied  to the cells where the $u_h$ are entirely zero. The scheme with reconstruction gives sharp resolution of expanding fronts, keeping the solution strictly within the initial bounds.  The scheme without reconstruction brings visible undershoots near the foot of the numerical solution.

{Fig.\ref{fig:porousm2compare} shows a numerical comparison for polynomials with different degrees,  $k=1,2,3$. Cell averages are  shown in Fig.\ref{fig:porousm2compare} (left) and cell polynomials in Fig.\ref{fig:porousm2compare}(right) (zoomed  near singularity), we can clearly see that
a higher order method gives  a  more accurate approximation.} \\

\begin{figure}[!htb]
\caption{Capturing singularity in the exact solution at $t=0.5$}
\centering
\includegraphics[scale=.7]{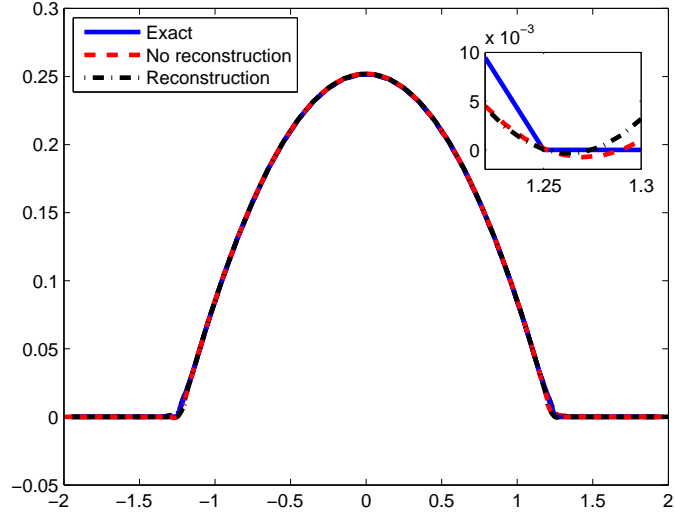}
\label{fig:porousm2exact}
\end{figure}

\begin{figure}[!htb]
\caption{Comparison of solutions for $k=1,2,3$}
\centering
\begin{tabular}{cc}
\includegraphics[scale=0.42]{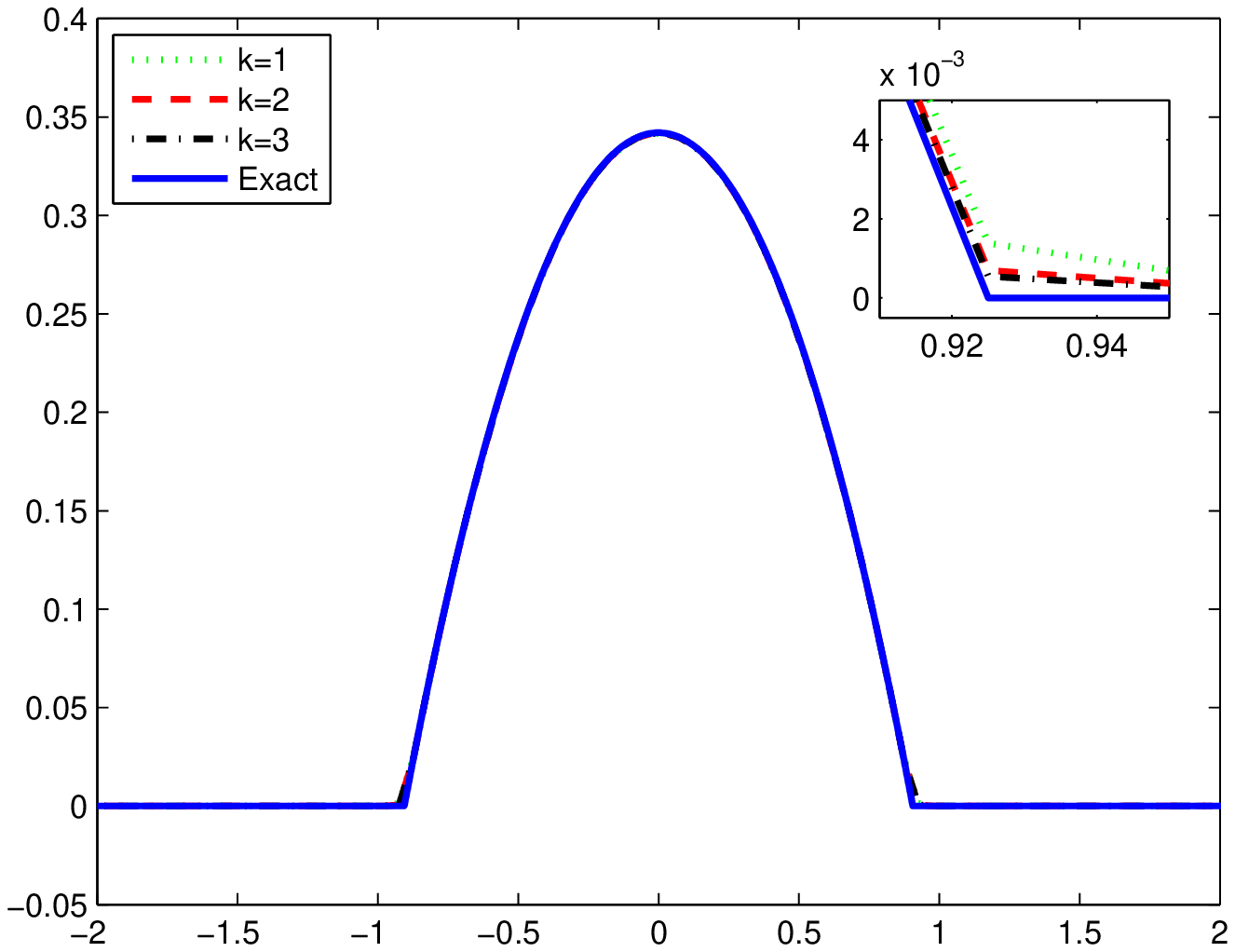}&
 \includegraphics[scale=0.42]{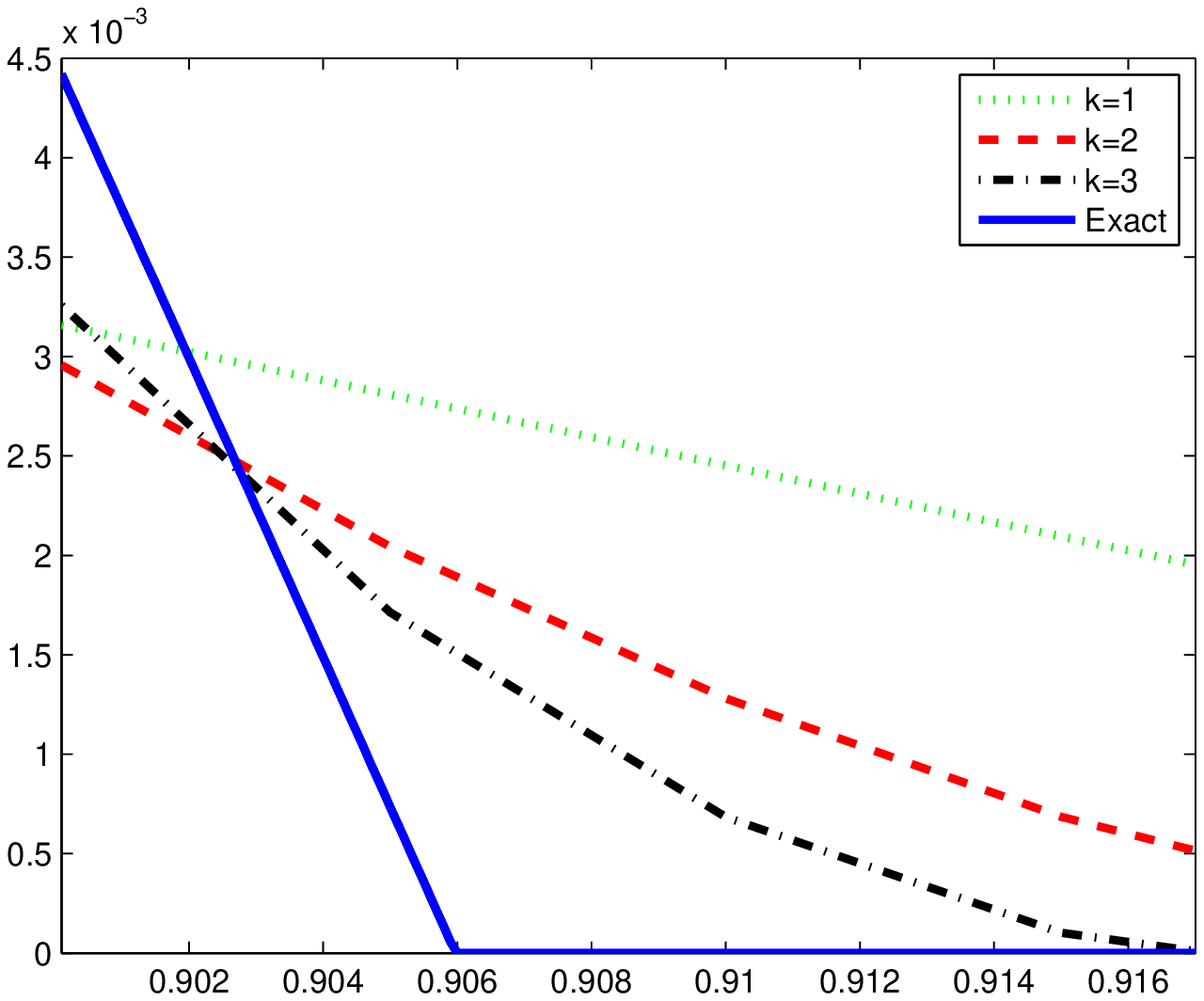}\\
 cell averages & cell polynomials
 \end{tabular}
\label{fig:porousm2compare}
\end{figure}

\noindent{\bf Example 2. Positivity preservation}\\
In this example we test the effect of using different parameter $\beta_1$ in terms of the positivity preservation.  Equation \eqref{pm} with $m=2$,  when written in the form
$$\partial_tu=\partial_x(f(u)\partial_xq),\quad f(u)=2u,\quad q=u,$$
satisfies the requirements  in    Theorem \ref{thk2}.  We consider {positive} initial data with small amplitude,
$$u_0(x)=\epsilon(1+30e^{-25x^2}),\quad x\in [-1,1],$$
 and zero flux boundary conditions $\partial_x u(\pm 1, t)=0$.   With $\epsilon=10^{-5}$, $\delta=10^{-10}$, $h=0.2$, $k=2$ and $\Delta t=0.25h^2$ in the simulation, our results indicate that cell average $\bar u$ remains above $\delta$  at $t=1000$ when using  $(\beta_0, \beta_1)=(2, 1/6)$; while $\bar u$ already becomes negative at $t=41.388$ when taking $(\beta_0, \beta_1)=(2, 0)$. This is consistent with the conclusion in Theorem \ref{thk2} that $ \beta_1\in (1/8, 1/4)$ is sufficient for positivity preservation of cell averages, and for any other $\beta_1$'s such a property is not guaranteed. {We note here that the range of $\beta_1$ in Theorem \ref{thk2} is only sufficient. Our simulation also indicates that cell average $\bar u$ still remains above $\delta$  at $t=1000$ when using  $(\beta_0, \beta_1)=(2, 1/2)$, which does not satisfy the requirement in Theorem \ref{thk2}.}

We further test the special effect of parameter $\beta_1$ on the positivity preservation for the case with nontrivial potential,
$\Phi=30\epsilon x^2/2$, i.e., we have
$$
\partial_t u = \partial_x(f(u)\partial_xq),\quad f(u)=2u,\quad q=u+30\epsilon x^2/2.
$$
Though Theorem \ref{thk2} is no longer applicable due to the nonzero potential,  we still see similar effects of $\beta_1$  through numerical experiments. With the same initial condition and parameters as above, our simulation results in Table \ref{tab:beta1}  show that there is a range for $\beta_1$ in which $\bar u$ remains above $\delta$ at $t=1000$; while $\bar u$ becomes negative at $t<1000$ when $\beta_1\leq 1/6$ or $\beta_1\geq2$. This observation indicates that  1) $\beta_1$ plays a special role for the positivity preservation; 2) the admissibility of $\beta_1$ depends on the underlying problem.
\begin{table}[!htb]
\caption{Time when $\bar u$ becomes negative}
\begin{tabular}{ |c|c|c| }
\hline
 $(\beta_0, \beta_1)$ & negative $\bar u$ time  \\ \hline
 (2,0)      & 35.41 \\ \hline
 (2, 1/12)    & 388.91\\ \hline
 (2,1/6)    &   845.69\\ \hline
 (2,1/3)    & $>$1000 \\ \hline
  (2,1/2)    & $>$1000 \\ \hline
   (2,2/3)    & $>$1000 \\ \hline
  (2,1)    & $>$1000 \\ \hline
  (2,2)    & 917.42\\ \hline
   (2,3)    &  740.92\\ \hline
  \end{tabular}
\label{tab:beta1}
\end{table}

\subsection{Porous medium equation with linear convection}
We consider the following porous medium equation with linear convection
$$
\partial_ t u =\partial_x^2 (u^m) + \partial_x u, \quad m>1.
$$
This equation corresponds to (\ref{fp}a) with $f(u)=u$, $\Phi=x$ and $H=\frac{u^m}{m-1}$, and has a wide range of applications. With this model equation we shall test the numerical convergence and the scheme accuracy. We note that the case $m=2$ was tested in \cite{BF} with a second order finite volume scheme.
\\

\noindent{\bf Example 3 (m=2).} We consider $$ \partial_ t u =\partial_x^2(u^2)+\partial_xu,$$   with initial data
$$
u_0(x) = 0.5 + 0.5 \sin(\pi x),	\quad x \in [-1, 1],
$$
subject to  zero-flux boundary condition, that is $\partial_xu(\pm 1, t)=-\frac{1}{2}$. In Table 2 we observe that the orders of convergence  are of $\mathcal{O}(h^{k+1})$  for polynomials of  degree $k$ ($k=1,2,3$). \\
{\small
\begin{table}[!htb]
\caption{Error table  for porous media equation with $m=2$ at $t=1$}
\begin{tabular}{ |c|l|c|c| }

\hline
$(k,\beta_0, \beta_1)$& h & $l_1$ error & order \\ \hline
\multirow{4}{*}{$(1,1,-)$}
 & 0.4  &         0.0056949&       --       \\
 & 0.2  &         0.0013756&            2.15 \\
 & 0.1  &        0.00034588&           2.20  \\
 & 0.05 &       6.5394e-005&         2.40   \\ \hline
 \multirow{4}{*}{$(2,4,1/12)$}
  & 0.4  &      0.00026132 &       --       \\
  & 0.2  &     3.9026e-005 &            2.86   \\
  & 0.1  &     5.3072e-006 &            2.91     \\
\ & 0.05 &     6.8756e-007 &            2.95   \\ \hline

 \multirow{4}{*}{$(3,9,1/4)$}
  & 0.4  &   4.4584e-005 &       --       \\
  & 0.2  &   4.4365e-006 &              3.71 \\
  & 0.1  &   3.2099e-007 &              3.91  \\
\ & 0.05 &   1.9724e-008 &              4.02 \\ \hline

\end{tabular}
\label{tab:porousm2}
\end{table}
}

\noindent{\bf Example 4 (m=3). } We further test the case $m=3$, i.e.,
$$ \partial_ t u =\partial_x^2(u^3)+\partial_xu,$$
with initial data
$$
u_0(x) = 1+ 0.5 \sin(\pi x),	\quad x \in [-1, 1],
$$
subject to zero-flux boundary conditions $(uu_x)(\pm 1, t)=-1/3$. The numerical convergence test is performed with the same flux parameters
for each $k$ as in the previous example, both errors and orders of convergence are given in Table \ref{tab:porousm3}.  These  results further confirm the
$(k+1)$-th order of accuracy when using $P^{k} (k=1, 2, 3)$ elements.

\begin{table}[!htb]
\caption{Error table for porous medium equation with $m=3$ at $t=1$}
\begin{tabular}{ |c|l|c|c| }
\hline
$(k,\beta_0, \beta_1)$& h &  $l_1$  error & order \\ \hline
\multirow{4}{*}{$(1,1,-)$}
 & 0.4  &           0.0014749 &       --       \\
 & 0.2  &          0.00037363 &             1.99 \\
 & 0.1  &         9.5215e-005 &           1.99  \\
 & 0.05 &         2.3636e-005 &         2.01   \\ \hline
 \multirow{4}{*}{$(2,4,1/12 )$}
  & 0.4  &      7.3404e-005&       --       \\
  & 0.2  &      9.5432e-006&              2.97 \\
  & 0.1  &      1.2268e-006&            2.98 \\
\ & 0.05 &      1.5257e-007&             3.00 \\ \hline

 \multirow{4}{*}{$(3,9,1/4)$}
  & 0.4  &      5.1001e-006&       --       \\
  & 0.2  &      3.4917e-007&         3.96  \\
  & 0.1  &      2.1473e-008  &         4.00 \\
\ & 0.05 &      1.3609e-009&         3.98  \\ \hline
\end{tabular}
\label{tab:porousm3}
\end{table}
Numerical tests in Example 3 and 4 also indicate that cell averages can be made positive in time  when choosing proper
parameters $(\beta_0, \beta_1)$, together with reconstruction \eqref{ureconstruct} performed at each time step.

\subsection{Nonlinear diffusion with a double-well potential}
Consider a nonlinear diffusion equation with an external double-well potential of the form
$$
\partial_t u=\partial_x (u\partial_x  (\nu u^{m-1}+\Phi )),  \quad \Phi =\frac{x^4}{4}-\frac{x^2}{2}.$$
This model equation is taken from \cite{CCH}, and it corresponds to system \eqref{fp} with  $H'(u)=\nu u^{m-1}$.  With this model we shall test both numerical accuracy and the asymptotic behavior of numerical solutions.
 \\

\noindent{\bf Example 5. Free energy decay}\\
In this example, we take $\nu=1$, $m=2$ and initial data
 $$
 u_0(x)=\frac{0.1}{\sqrt{0.4\pi}}e^{-\frac{x^2}{0.4}}, \quad x\in [-2, 2],
 $$
subject to zero-flux boundary conditions $\partial_x u(\pm 2, t)=\mp 6$. Both errors and orders of convergence are given in Table  \ref{tab:diffusion}, {which again demonstrates    $\mathcal{O}(h^{k+1})$ order of accuracy for $P^k$ polynomials. }

\begin{table}
\caption{Error table for nonlinear diffusion with a double-well potential at $t=1$}
\begin{tabular}{ |c|l|c|c| }
\hline
$(k,\beta_0, \beta_1)$& h &$l_1$  error & order \\ \hline
\multirow{4}{*}{$(1,1,-)$}
 & 0.4  &     0.082882 &       --       \\
 & 0.2  &    0.0051793  &            2.70\\
 & 0.1  &    0.0012178  &             2.06 \\
 & 0.05 &  0.00029961 &            2.02\\ \hline
 \multirow{4}{*}{$(2,4,1/12)$}
  & 0.4  &      0.16726 &       --       \\
  & 0.2  &     0.020986 &         3.08 \\
  & 0.1  &    0.0023122&          3.18  \\
\ & 0.05 &   0.00027875&          3.05 \\ \hline
 \multirow{4}{*}{$(3,12,1/24)$}
  & 0.8  &        0.09677&       --       \\
  & 0.4  &       0.010059&                   3.82 \\
  & 0.2  &     0.00051784&                   4.10\\
\ & 0.1 &     3.4058e-005&                   3.93\\ \hline
\end{tabular}
\label{tab:diffusion}
\end{table}

 We also examine the decay of  the
 entropy
$$
E=\int_{-2}^2 \left(\Phi(x) u +H (u)\right) dx =\int^2_{-2} \left[\left(\frac{x^4}{4}-\frac{x^2}{2}\right)u+ \frac{u^2}{2}\right] dx.
$$
Figure \ref{fig:energydecay} (left) shows the semilog plot of the free energy decay until final time $T=40$, and Figure \ref{fig:energydecay} (right) displays  the snapshots of $u$ at different times, showing the time-asymptotic convergence of the numerical solutions towards the steady states.
\begin{figure}[!htb]
\caption{Entropy decay of nonlinear diffusion with double well potential}
\centering
\includegraphics[scale=0.51]{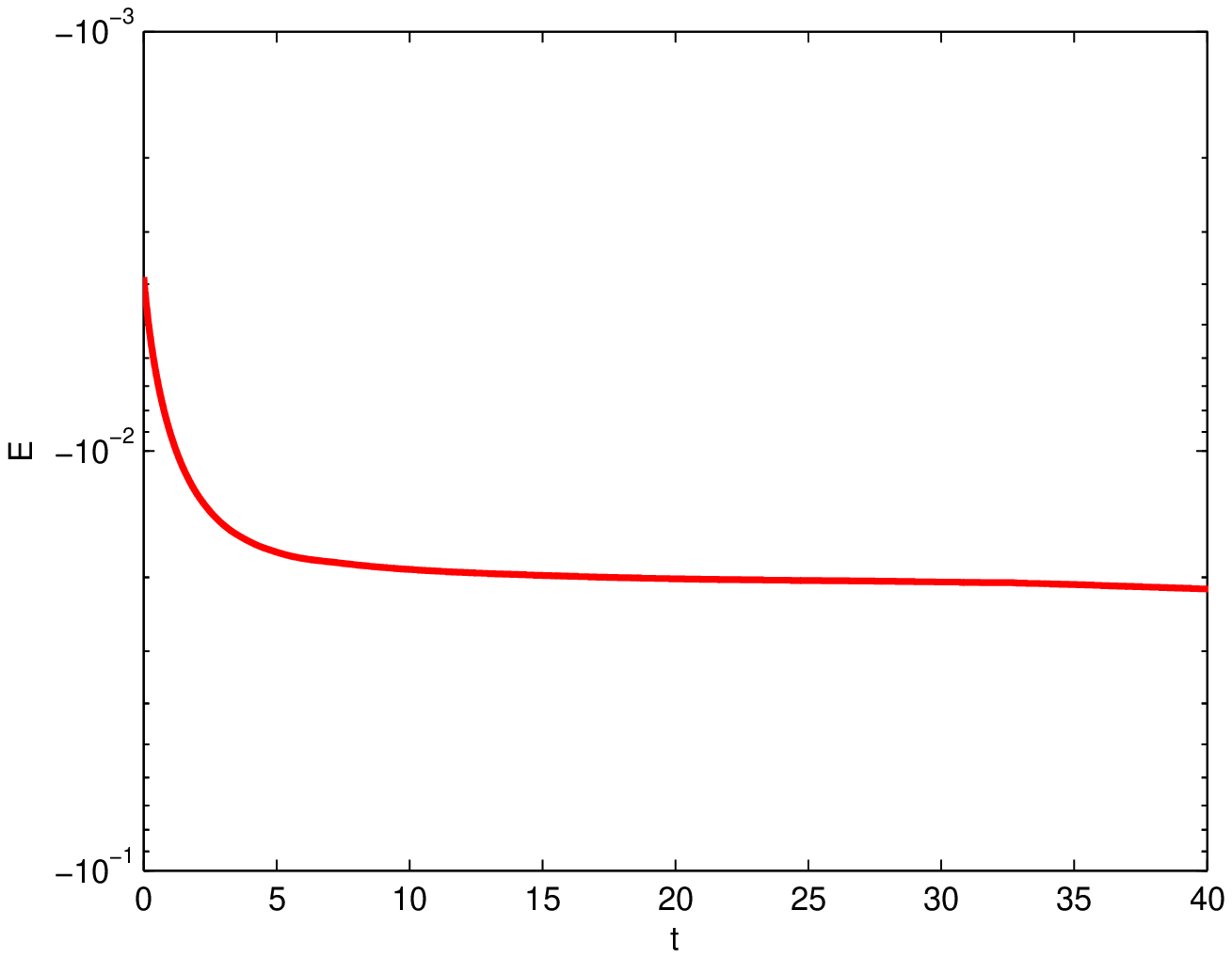}
 \includegraphics[scale=0.4]{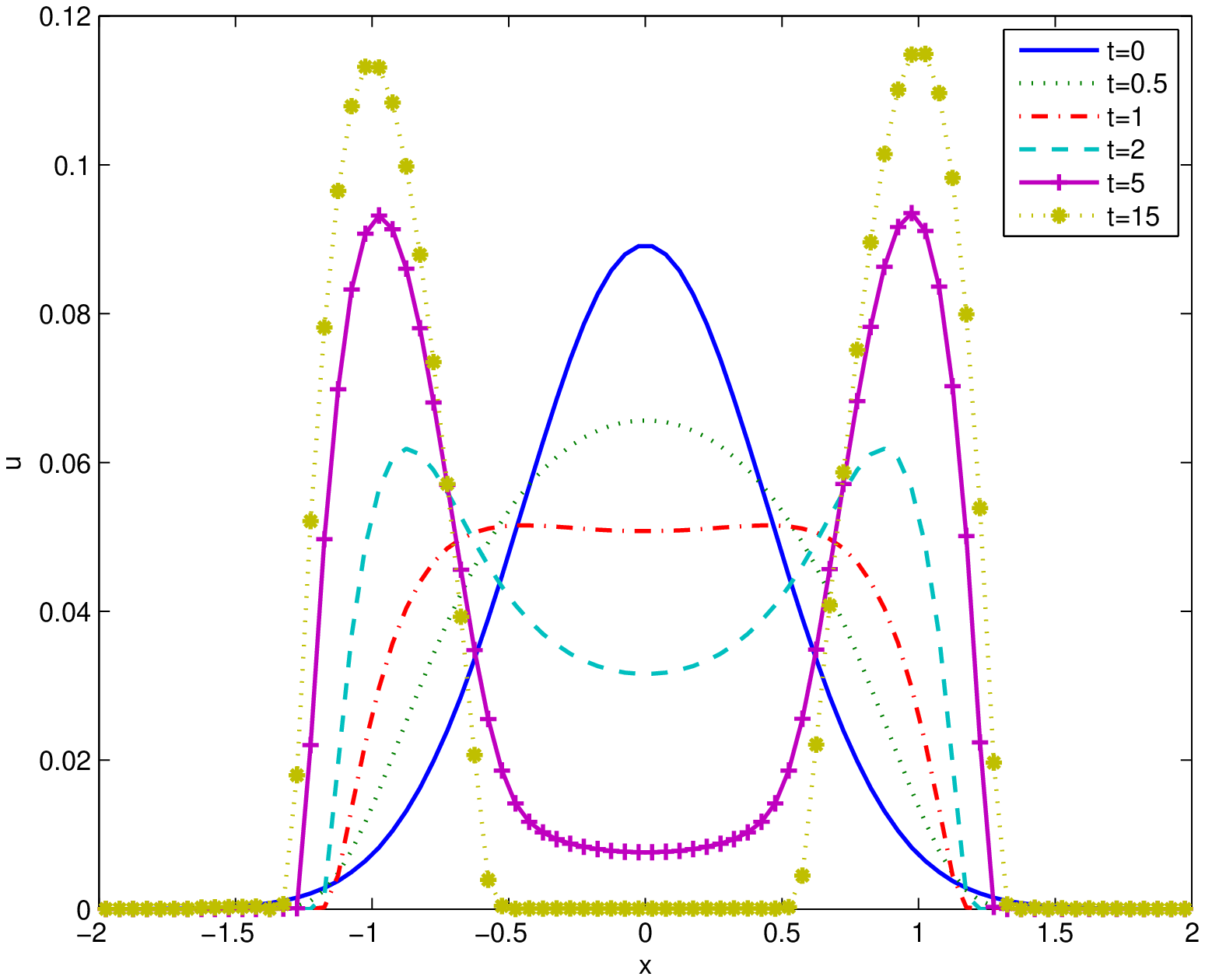}
\label{fig:energydecay}
\end{figure}

\bigskip

\subsection{The nonlinear Fokker-Planck equation} We consider the following model for boson gases,
\begin{align}\label{bg}
\partial_t u=\partial_x (x u(1+u^3)+\partial_x u),  \quad  t>0,
\end{align}
which is a nonlinear Fokker-Planck equation corresponding to (\ref{fp}a) with
$$
\Phi =\frac{x^2}{2},\quad f(u)=u(1+u^3), \quad H'(u)=\log \frac{u}{\sqrt[3]{1+u^3}}.
$$
This model equation exhibits  the critical mass phenomenon (see \cite{AGT}), that solutions with initial data of large mass blow-up in finite time, whereas solutions with initial data of small mass do not.  The authors in \cite{BF} numerically verified such critical mass phenomenon using a second order finite volume scheme. With our high order DG scheme, we test the critical mass phenomenon for (\ref{bg}) with initial data
$$
u_0(x)=\frac{M}{2\sqrt{2\pi}}\left( \exp\left(-\frac{(x-2)^2}{2}\right) + \exp\left(-\frac{(x+2)^2}{2}\right) \right),
$$
which has total mass $M$.  This is to illustrate the good performance of the ESDG scheme in capturing complex physical phenomena.
\bigskip

\noindent{\bf Example 6. Sub-critical mass $M=1$ and super-critical mass $M=10$}\\
We test the sub-critical mass $M=1$ with results in Figure \ref{fig:GFP} (left) and super-critical mass $M=10$ with results in Figure \ref{fig:GFP} (right) by $P^2$ polynomial approximations.
These results are consistent with the theoretical conclusion made in \cite{AGT} and the numerical observation in \cite{BF}, yet our scheme can produce numerical solutions with higher order of accuracy.  Note that  the reconstruction \eqref{ureconstruct} has to be implemented due to the involvement of $\log$-function in $H'(u)$.

\begin{figure}[!htb]
\caption{Dynamics of the general Fokker-Planck equation}
\centering
\begin{tabular}{cc}
\includegraphics[scale=.475]{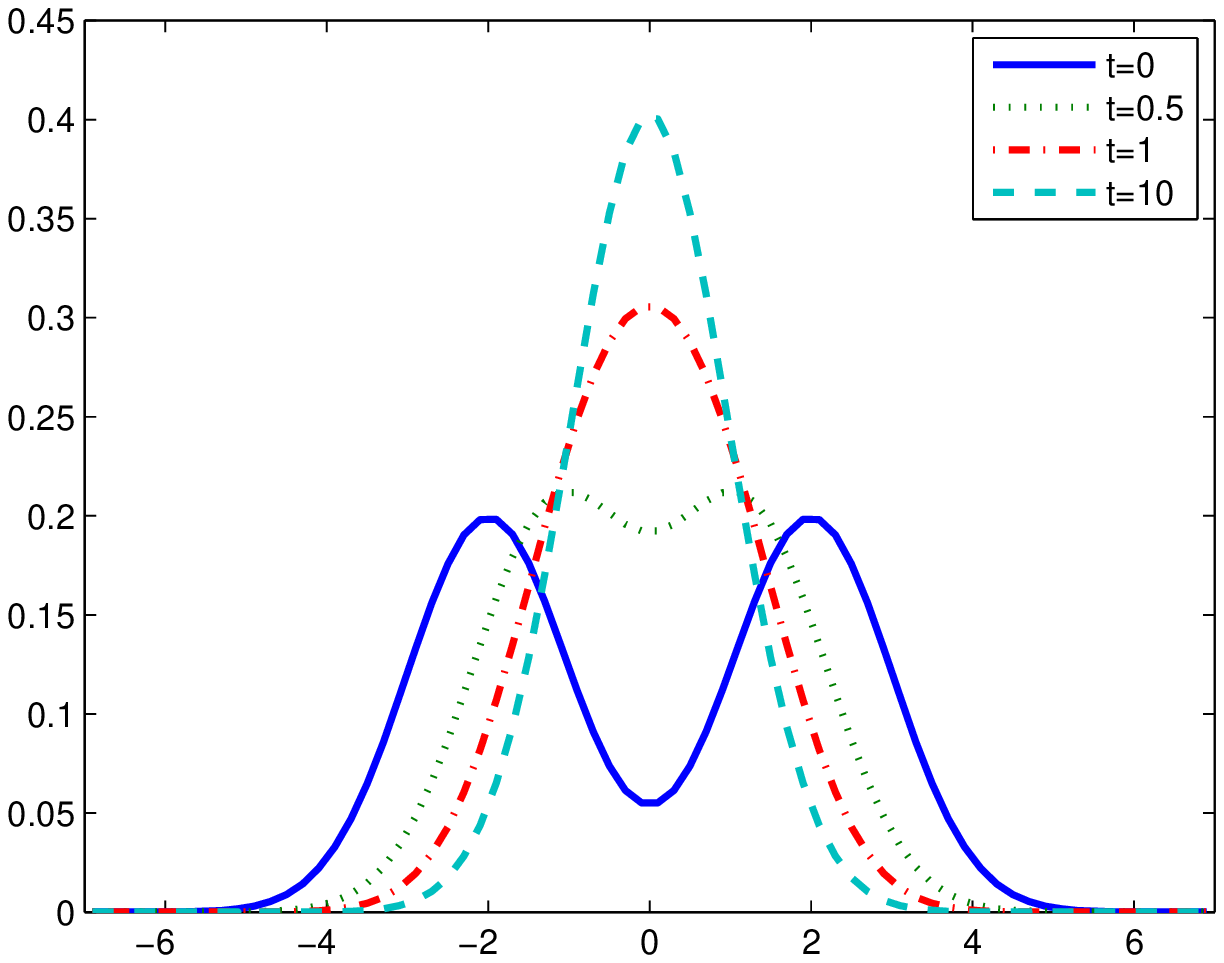}&
\includegraphics[scale=.5]{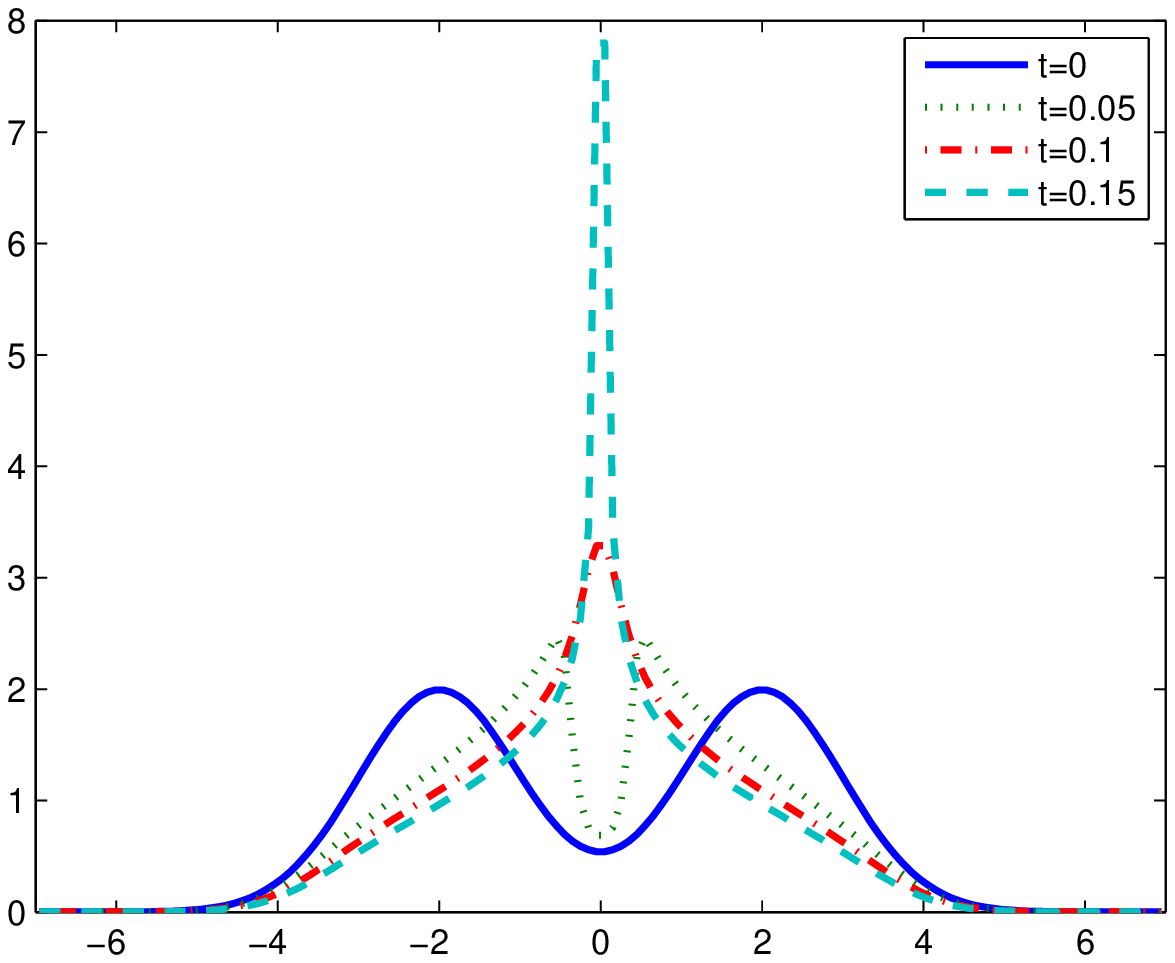} \\
{\small Sub-critical mass $M=1$}& {\small Super-critical mass $M=10$}
\end{tabular}
\label{fig:GFP}
\end{figure}

\section{Concluding remarks} In this article, we have developed an entropy satisfying DG method for solving nonlinear Fokker-Planck equations with a gradient flow structure. The idea is to rewrite the equation in the form of a convection equation with flux being $-f(u)\partial_xq$, and $q$ is obtained by a piecewise $L^2$ projection of $\Phi(x) +H'(u)$.  Then we apply the numerical flux of the DDG method introduced in \cite{LY10} to  $\partial_x q$.  The present scheme  is shown to satisfy a discrete version of the entropy dissipation law, therefore preserving steady-states and providing numerical solutions with satisfying long-time behavior. The positivity of numerical solutions is enforced through a reconstruction algorithm, based on positive cell averages. Cell averages can be made positive at each time step by carefully tuning the numerical flux parameter $(\beta_0, \beta_1)$.  For the model with trivial potential, a parameter range sufficient for positivity preservation is rigorously established. Numerical examples include the porous medium equation, the nonlinear diffusion equation with a double-well potential, and the general Fokker-Planck equation. Numerical results have demonstrated high-order accuracy of the scheme.  Moreover,  the long-time solution behavior is also examined to show the robustness of the proposed scheme.
\section*{Acknowledgments}
 Liu was supported by the National Science Foundation under Grant DMS1312636 and by NSF Grant RNMS
(Ki-Net) 1107291.



\begin{thebibliography}{10}


\bibitem{AGT}
N.~Ben~Abdallah, I.~M. Gamba, and G.~Toscani.
\newblock On the minimization problem of sub-linear convex functionals.
\newblock {\em Kinet. Relat. Models}, 4(4):857--871, 2011.

\bibitem{AU}
A.~Arnold and A.~Unterreiter.
\newblock Entropy decay of discretized {F}okker-{P}lanck equations {I}---{T}emporal semidiscretization.
\newblock {\em Comput. Math. Appl.}, 46(10-11):1683--1690, 2003.

\bibitem{Barenblatt}
 G. I. Barenblatt.
\newblock On some unsteady fluid and gas motions in a porous medium.
\newblock {\em Prikladnaya Matematika i Mekhanika (Applied Mathematics and Mechanics (PMM))}, 16, No. 1, pp. 67-78 , 1952 (in Russian).

\bibitem{BCW}
M. Burger, J. A. Carrillo, and M.-T. Wolfram.
\newblock  A mixed finite element method for nonlinear diffusion equations.
\newblock {\em  Kinet. Relat. Models}, 3:59--83, 2010.

\bibitem{BF}
M.~Bessemoulin-Chatard and F.~Filbet.
\newblock A finite volume scheme for nonlinear degenerate parabolic equations.
\newblock {\em SIAM J. Sci. Comput.}, 34(5):B559--B583, 2012.

\bibitem{CCH}
J.~Carrillo, A.~Chertock, and Y.~H. Huang.
\newblock A finite-volume method for nonlinear nonlocal equations with a
  gradient flow structure.
\newblock {\em Commun. Comput. Phys.}, 17:233--258, 2015.

\bibitem{CJMTU}
J.~A. Carrillo, A.~J{\"u}ngel, P.~A. Markowich, G.~Toscani, and A.~Unterreiter.
\newblock Entropy dissipation methods for degenerate parabolic problems and
  generalized {S}obolev inequalities.
\newblock {\em Monatsh. Math.}, 133(1):1--82, 2001.

\bibitem{CLR}
J.~A. Carrillo, P.~Lauren{\c{c}}ot, and J.~Rosado.
\newblock Fermi-{D}irac-{F}okker-{P}lanck equation: well-posedness \& long-time
  asymptotics.
\newblock {\em J. Differential Equations}, 247(8):2209--2234, 2009.

\bibitem{CMV03}
J. A. Carrillo, R. J. McCann, and C. Villani.
\newblock Kinetic equilibration rates for granular media and related equations: entropy dissipation and mass transportation estimates.
\newblock {\em Rev. Mat. Iberoam.}, 19:971--1018, 2003.

\bibitem{CRS}
J.~A. Carrillo, J.~Rosado, and F.~Salvarani.
\newblock 1{D} nonlinear {F}okker-{P}lanck equations for fermions and bosons.
\newblock {\em Appl. Math. Lett.}, 21(2):148--154, 2008.

\bibitem{CT}
J.~A. Carrillo and G.~Toscani.
\newblock Asymptotic {$L^1$}-decay of solutions of the porous medium equation
  to self-similarity.
\newblock {\em Indiana Univ. Math. J.}, 49(1):113--142, 2000.


\bibitem{HW07}
J. S. Hesthaven and T. Warburton.
\newblock  Nodal {D}iscontinuous {G}alerkin {M}ethods: {A}lgorithms, {A}nalysis, and {A}pplications.
\newblock  Springer, New York, 2007.

\bibitem{JKO98}
 R. Jordan, D. Kinderlehrer, and F. Otto.
\newblock The variational formulation of the Fokker--Planck equation.
\newblock {\em SIAM J. Math. Anal.}, 29(1): 1--17, 1998.


\bibitem{Li06}
 B. Q. Li.
 \newblock  Discontinuous Finite Elements in Fluid Dynamics and Heat Transfer.
 \newblock  Computational Fluid and Solid Mechanics, Springer, London, 2006.


\bibitem{Liu14}
 H.~Liu.
\newblock  Optimal error estimates of the direct discontinuous Galerkin method for convection--diffusion
equations.
\newblock  {\em Math. Comp.},  84: 2263--2295, 2015.


\bibitem{LO96}
X.~Liu and S.~Osher.
\newblock Nonoscillatory high order accurate self-Similar maximum principle satisfying shock capturing
schemes I.
\newblock {\em SIAM  J. Number. Anal.}, 33(2):760--779, 1996.


\bibitem{LP15}
H.~Liu and M. ~Pollack.
\newblock Alternating evolution discontinuous Galerkin methods for convection-diffusion equations.
\newblock {\em J. Comput. Phys.},  in press, 2016.

\bibitem{LW}
H.~Liu and Z.~Wang.
\newblock A free energy satisfying finite difference method for
  {P}oisson-{N}ernst-{P}lanck equations.
\newblock {\em J. Comput. Phys.}, 268:363--376, 2014.


\bibitem{LY09}
H.~Liu and J.~Yan.
\newblock  The direct discontinuous Galerkin (DDG) methods for diffusion problems.
\newblock  {\em SIAM J. Numer. Anal.}, 47:   675--698, 2009.

\bibitem{LY10}
H.~Liu and J.~Yan.
\newblock The direct discontinuous {G}alerkin ({DDG}) method for diffusion with
  interface corrections.
\newblock {\em Commun. Comput. Phys.}, 8(3):541--564, 2010.

\bibitem{LY12}
H.~Liu and H.~Yu.
\newblock  An entropy satisfying conservative method for the Fokker--Planck equation of the finitely extensible nonlinear elastic dumbbell model.
\newblock {\em  SIAM J. Numer. Anal.}, 50:1207--1239, 2012.

\bibitem{LY14a}
H.~Liu and H.~Yu.
\newblock The entropy satisfying dicontinuous {G}alerkin method for {F}okker-{P}lanck equations.
\newblock  {\em J. Sci. Comput.}  62: 803--830, 2015.

\bibitem{LY14b}
H.~Liu and H.~Yu.
\newblock Maximum-{P}rinciple-{S}atisfying third order discontinuous
  {G}alerkin schemes for {F}okker--{P}lanck  equations.
\newblock {\em SIAM J. Sci. Comput.}, 36(5):A2296--A2325, 2014.

\bibitem{Otto}
F.~Otto.
\newblock The geometry of dissipative evolution equations: the porous medium
  equation.
\newblock {\em Comm. Partial Differential Equations}, 26(1-2):101--174, 2001.
\bibitem{Pattle}
R. E.~Pattle.
\newblock Diffusion from an instantaneous point source with a concentration-dependent coefficient.
\newblock {\em Quart. J. Mech. Appl. Math.}, 12:407-409, 1959.


\bibitem{Ri08}
B. Rivi\`{e}re.
\newblock  Discontinuous Galerkin Methods for Solving Elliptic and Parabolic Equations:
\newblock  Theory and Implementation, SIAM, Philadelphia, 2008.

\bibitem{Sh09}
C.-W. Shu.
\newblock  Discontinuous Galerkin methods: General approach and stability, in Numerical Solutions of Partial Differential Equations, S. Bertoluzza, S. Falletta, G. Russo, and C.-W. Shu, eds.
\newblock  Advanced Courses in Mathematics, CRM Barcelona, Birkha\"{u}ser, Basel, 2009, pp. 149Ð201.

\bibitem{Toscani}
G.~Toscani.
\newblock Finite time blow up in {K}aniadakis-{Q}uarati model of
  {B}ose-{E}instein particles.
\newblock {\em Comm. Partial Differential Equations}, 37(1):77--87, 2012.


\bibitem{WH03}
T. Warburton and J. S. Hesthaven.
\newblock On the constants in hp-finite element trace inequalities.
\newblock {\em Comput. Methods Appl.
Mech. Engin.} 192:2765--2773, 2003.


\bibitem{ZS10}
X.-X. Zhang and C.-W. Shu.
\newblock On maximum-principle-satisfying high order schemes for scalar conservation laws.
\newblock {\em J. Comput. Phys.}, 229(9):3091--3120, 2010.

\end{thebibliography}
\end{document}